\begin{document}
\newtheorem{lem}{Lemma}[section]
\newtheorem{theorem}{Theorem}[section]
\newtheorem{prop}{Proposition}[section]
\newtheorem{cor}{Corollary}[section]

\theoremstyle{remark}
\newtheorem{remark}{Remark}[section]

\allowdisplaybreaks

\newcommand{\la}{\langle}
\newcommand{\ra}{\rangle}
\newcommand{\eps}{\varepsilon}

\makeatletter\@addtoreset{equation}{section}\makeatother
\def\theequation{\arabic{section}.\arabic{equation}}

{\Large \bf
\begin{center} A note on equilibrium Glauber and
Kawasaki dynamics for permanental point processes
\end{center}
}

{\large Guanhua Li}\\ Department of Mathematics,
University of Wales Swansea, Singleton Park, Swansea SA2 8PP, U.K.\\
e-mail: \texttt{206674@swansea.ac.uk}\vspace{2mm}

{\large Eugene Lytvynov}\\ Department of Mathematics,
University of Wales Swansea, Singleton Park, Swansea SA2 8PP, U.K.\\
e-mail: \texttt{e.lytvynov@swansea.ac.uk}\vspace{2mm}
{\small

\begin{center}
{\bf Abstract}
\end{center}

We construct two types of equilibrium dynamics of an infinite particle system in a
locally compact metric space $X$ for which a permanental point process is a symmetrizing, and hence invariant measure.
The Glauber dynamics is a birth-and-death process in $X$, while
in the Kawasaki dynamics interacting particles randomly hop over $X$.
In the case $X=\mathbb R^d$, we consider a diffusion approximation for the Kawasaki dynamics at the level of Dirichlet forms. This  leads us to an equilibrium dynamics of interacting Brownian particles for which a permanental point process is a symmetrizing  measure.
\noindent  } \vspace{2mm}

\noindent 2010 {\it AMS Mathematics Subject Classification:}
60F99,  60J60, 60J75, 60J80, 60K35\vspace{1.5mm}

\noindent{\it Keywords:} Birth-and-death process; Continuous
system; Permanental point process; Glauber dynamics; Kawasaki dynamics \vspace{1.5mm}

\section{Introduction}
Let $X$ be a locally compact Polish space and let $\nu$ be a Radon
non-atomic measure on it. Let $\Gamma=\Gamma_{X}$ denote the space
of all locally finite subsets (configurations) in $X$.

A Glauber dynamics (a birth-and-death process of an infinite system
of particles in $X$) is a Markov process on $\Gamma$ whose formal
(pre-)generator has the form
\begin{equation}\label{1}
\begin{aligned}
(L_{\mathrm G}F)(\gamma)&=\sum_{x\in{\gamma}}d(x,\gamma\setminus x)(F(\gamma\setminus
x)-F(\gamma))
\\
&  +\int_{X}\nu(dx)\,b(x, \gamma)\left( F(\gamma\cup
x)-F(\gamma)\right), \quad \gamma\in\Gamma.
\end{aligned}
\end{equation}
Here and below, for simplicity of notation we write $x$ instead of
$\{x\}$. The coefficient $d(x,\gamma\setminus x)$ describes the
rate at which particle $x$ of configuration $\gamma$ dies, while $b(x,\gamma)$ describes the rate at which, given configuration $\gamma$, a new particle
is born at $x$.

A Kawasaki dynamics (a dynamics of hopping particles) is a Markov
process on $\Gamma$ whose formal (pre-)generator is
\begin{equation}\label{2}
(L_{\mathrm K}F)(\gamma)=\sum_{x\in \gamma}c(x,y,\gamma\setminus
x)(F(\gamma\setminus x\cup y)-F(\gamma)), \quad \gamma\in\Gamma.
\end{equation}
The coefficient $c(x, y, \gamma\setminus x)$ describes the rate at
which particle $x$ of configuration $\gamma$ hops to $y$, taking the
rest of the configuration, $\gamma\setminus x$, into account.

Equilibrium Glauber and Kawasaki dynamics which have a standard
Gibbs measure as symmetrizing (and hence invariant) measure were
constructed in \cite{KL,KLR}. In \cite{LO}, this construction was extended
to the case of an equilibrium dynamics which has a determinantal
(fermion) point process as invariant measure, For further stu\-dies of
equilibrium and non-equilibrium Glauber and Kawasaki dynamics, we
refer to \cite{BCC,FK,FKK,FKL,FKO,HS,G1,G2,KKL,KKM,KKP,KKZ,KMZ,P} and the references therein.

The aim of this note is to show that general criteria of existence
of Glauber and Kawasaki dynamics which were developed in \cite{LO} are appliable to a wide class of
\mbox{$\alpha$-per}ma\-nental
($\alpha\in \mathbb{N}$) point processes, proposed  by Shirai and
Takahashi \cite{ST}. This class includes classical permanental (boson)
point processes, see e.g.\  \cite{DVJ,ST}. We will also consider a diffusion approximation for
the Kawasaki dynamics at the level of Dirichlet forms (compare with \cite{KKL}).
This will lead us to an equilibrium dynamics of interacting Brownian particles for which
an $\alpha$-permanental point process is a symmetrizing  measure. As a by-product of our considerations, we will also extend the result  of
\cite{ST} on the existence of $\alpha$-permanental point process.

\section{Equilibrium Glauber and Kawasaki dynamics -- general results}
Let $X$ be a locally compact Polish space. We denote by
$\mathcal{B}(X)$ the Borel $\sigma$-algebra on $X$, and by
$\mathcal{B}_{0}(X)$ the collection of all sets from
$\mathcal{B}(X)$ which are relatively compact. We fix a Radon,
non-atomic measure on $(X,\mathcal{B}(X))$. (For most applications,
the reader may think of $X$ as $\mathbb{R}^{d}$ and $\nu$ as the
Lebegue measure.)

The configuration space $\Gamma$ over $X$ is defined as the set of
all subsets of $X$ which are locally finite
 $$\Gamma:=\big\{\gamma\subset X : \,
|\gamma_\Lambda|<\infty\text{ for each  }\Lambda\in{\mathcal B}_0(X
)\big\},$$ where $|\cdot|$ denotes the cardinality of a set and
$\gamma_\Lambda:= \gamma\cap\Lambda$. One can identify any
$\gamma\in\Gamma$ with the positive Radon measure
$\sum_{x\in\gamma}\varepsilon_x$,  where
$\varepsilon_x$ is the Dirac measure with mass at $x$ and
$\sum_{x\in\varnothing}\varepsilon_x{:=}$zero measure. The space $\Gamma$ can be endowed with the vague topology,
i.e., the weakest topology on $\Gamma$ with respect to which  all
maps $$\Gamma\ni\gamma\mapsto\la \varphi,\gamma\ra:=\int_{X}
\varphi(x)\,\gamma(dx) =\sum_{x\in\gamma}\varphi(x),\quad
\varphi\in C_0(X),$$ are continuous. Here, $C_0(X)$ is the space of
all continuous,  real-valued functions on $X$ with compact support.
We denote the Borel $\sigma$-algebra on $\Gamma$ by
$\mathcal{B}(\Gamma)$. A point process in $X$ is a probability
measure on $(\Gamma, \mathcal{B}(\Gamma))$.

We fix a point process $\mu$ which satisfies the so-called condition
$(\Sigma'_{\nu})$ \cite{DVJ,MWM}, i.e., there exist a measurable function
$r: X\times \Gamma\rightarrow [0, +\infty]$, called the Papangelou
intensity of $\mu$, such that
\begin{equation}\label{3}
\int_{\Gamma}\mu(d\gamma)\int_{X}\gamma(dx)F(x,
\gamma)=\int_{\Gamma}\mu(d\gamma)\int_{X}\nu(dx)\,r(x,\gamma)F(x,\gamma\cup
x)
\end{equation}
for any measurable function $F: X\times \Gamma\rightarrow
[0,+\infty]$. The condition $(\Sigma'_{\nu})$ can be thought of as a kind
of weak Gibbsianess of $\mu$. Intuitively, we may treat the
Papangelou intensity as
\begin{equation}\label{7}
r(x,\gamma)=\exp[-E(x,\gamma)],
\end{equation}
where $E(x,\gamma)$ is the relative energy of interaction between
particle $x$ and configuration~$\gamma$.

To define an equilibrium Glauber dynamics for which $\mu$ is a
symmetrizing measure, we fix a death coefficient as a measurable
function $d:X\times \Gamma\rightarrow [0, +\infty]$, and then
define a birth coefficient $b: X\times\Gamma\rightarrow [0,
+\infty]$ by
\begin{equation}\label{5}
b(x,\gamma)=d(x,\gamma)r(x,\gamma), \quad (x,\gamma)\in X\times
\Gamma .
\end{equation}
To define a Kawasaki dynamics, we fix a measurable function
$c:X^{2}\times \Gamma^2\rightarrow [0,+\infty]$ which satisfies
\begin{equation}\label{4}
r(x,\gamma)c(x,y,\gamma)=r(y,\gamma)c(y,x,\gamma), \quad
(x,y,\gamma)\in X^{2}\times \Gamma.
\end{equation}
Formulas \eqref{5} and \eqref{4} are called the balance
conditions \cite{G1,G2}. We will also assume that the function $c(x,y,\gamma)$ vanishes if at
least one of the functions $r(x,\gamma)$ and $r(y,\gamma)$ vanishes, i.e.,
\begin{align}\label{6}
c(x,y,\gamma)=c(x,y,\gamma)\chi_{\{r>0\}}(x,\gamma)\chi_{\{r>0\}}(y,\gamma).
\end{align}
Here, for a set $A$, $\chi_{A}$ denotes the indicator function of
$A$. We refer to \cite[Remark 3.1]{LO} for a justification of this
assumption, which involves the interpretation of $r(x,\gamma)$ as in
\eqref{7}, see also Remark \ref{ghdstsgcf} below.

We denote by
$\mathcal{F}C_{\mathrm b}(C_{0}(X),\Gamma)$ the space of all
functions of the form
\begin{equation}\label{hdtrse}\Gamma\ni\gamma\mapsto F(\gamma)=g(\langle \varphi_{1},\gamma\rangle,\dots,\langle \varphi_{N},\gamma\rangle),\end{equation}
where $N\in \mathbb{N}$, $\varphi_{1},\dots, \varphi_{N}\in C_{0}(X)$
and $g\in C_{\mathrm b}(\mathbb{R}^{N})$.
Here, $C_{\mathrm b}(\mathbb{R}^{N})$ denotes the space of all continuous
bounded functions on $\mathbb{R}^{N}$.
We
assume that, for each $\Lambda\in \mathcal{B}_{0}(X)$,
\begin{gather}\label{8}
\int_{\Gamma}\mu(d\gamma)\int_{\Lambda}\gamma(dx)\,d(x,\gamma\setminus
x)<\infty,\\
\label{9}
\int_{\Gamma}\mu(d\gamma)\int_{X}\gamma(dx)\int_{X}\nu(dy)\,c(x,y,\gamma\setminus
x)(\chi_{\Lambda}(x)+\chi_{\Lambda}(y))<\infty.
\end{gather}
As easily seen, conditions \eqref{8} and \eqref{9} are sufficient in
order to define bilinear forms
\begin{align*}
{\mathcal{E}}_{\mathrm G}(F,G):&=\int_{\Gamma}\mu(d\gamma)\int_{X}\gamma(dx)\,d(x,\gamma\setminus
x)(F(\gamma\setminus x)-F(\gamma))(G(\gamma\setminus x)-G(\gamma)),
\\
{\mathcal{E}}_{\mathrm K}(F,G):&=\frac{1}{2}\int_{\Gamma}\mu(d\gamma)\int_{X}\gamma(dx)\int_{X}\nu(dy)\,c(x,y,\gamma\setminus
x)(F(\gamma\setminus x\cup y)-F(\gamma))
\\
&
\times
(G(\gamma\setminus x\cup y)-G(\gamma)),
\end{align*}
where $F,G\in \mathcal{F}C_{\mathrm b}(C_{0}(X),\Gamma)$.

For the construction of the Kawasaki dynamics, we will also assume
that the following technical assumptions holds:
\begin{equation}\label{10}
\begin{aligned}
& \exists u,v\in \mathbb{R}\quad \forall \Lambda\in\mathcal B_0(X):
\\
& \qquad  \int_{\Lambda}\gamma(dx)\int_{\Lambda}\nu(dy)\,r(x,\gamma\setminus
x)^{u}r(y,\gamma\setminus x)^{v}c(x,y,\gamma\setminus y)\in
L^{2}(\Gamma, \mu)<\infty.
\end{aligned}
\end{equation}
 Note that in formula
\eqref{10} and below, we use the convention $\frac{0}{0}:=0$.

The following theorem was essentially proved in \cite{LO}.

\begin{theorem}\label{asd}
 (i) Assume that a point process $\mu$ satisfies \eqref{3}. Assume that
conditions \eqref{5}, \eqref{8}, respectively
\eqref{4}, \eqref{6},
\eqref{9},  and \eqref{10} are satisfied. Let $\sharp=\mathrm G, \mathrm K$.
Then the bilinear form $(\mathcal{E}_{\sharp},
\mathcal{F}C_{\mathrm b}(C_{0}(x),\Gamma))$ is closable in $L^{2}(\Gamma,
\mu)$ and its closure will be denoted by $(\mathcal{E}_{\sharp},
D(\mathcal{E}_{\sharp}))$.
Further there exists a conservative Hunt process (Glauber, respectively
Kawasaki dynamics)
\begin{align*}
M^{\sharp}=\left(\Omega^{\sharp},
\mathcal{F}^{\sharp},(\mathcal{F}^{\sharp}_{t})_{t\geq
0},(\Theta_{t}^{\sharp})_{t\geq 0}, (X^{\sharp}(t))_{t\geq 0},
(P^{\sharp}_{\gamma})_{\gamma\in \Gamma}\right)
\end{align*}
on $\Gamma$ which is properly associated with
$(\mathcal{E}_{\sharp}, D(\mathcal{E}_{\sharp}))$, i.e., for all
($\mu$-version of) $F\in L^{2}(\Gamma, \mu)$ and $t>0$ $$\Gamma\ni
\gamma\mapsto
p^{\sharp}_{t}F(\gamma):=\int_{\Omega^\sharp}F(X^{\sharp}(t))\, dP_{\gamma}^{\sharp}$$
is an $\mathcal{E}^{\sharp}$-quasi continuous version of
$\exp(tL_{\sharp})F$, where $(-L_{\sharp}, D(L_{\sharp}))$ is the
generator of $(\mathcal{E}_{\sharp}, D(\mathcal{E}_{\sharp}))$.
$M^{\sharp}$ is up-to $\mu$-equivalence unique. In particular,
$M^{\sharp}$ is $\mu$-symmetric and has $\mu$ as invariant measure.

 (ii)  $M^\sharp$ from (i) is  up to $\mu$-equivalence unique
between all Hunt processes $$M'=\left(\Omega',
\mathcal{F}',(\mathcal{F}'_{t})_{t\geq 0}, (\Theta'_{t})_{t\geq 0},
(X'(t))_{t\geq 0}, (P'_{\gamma})_{\gamma\in \Gamma}\right)$$ on
$\Gamma$ having $\mu$ as invariant measure and solving a martingale
problem for $(L_{\sharp}, D(L_{\sharp}))$, i.e.,  for all $G\in
D(H_\sharp)$
$$\widetilde G({X}'(t))-\widetilde G({X}'(0))-\int_0^t (L_\sharp
G)({X}'(s))\,ds,\quad t\ge0,$$ is an $({\mathcal F}_t')$-martingale
under ${P}_\gamma'$ for ${\mathcal E}_\sharp$-q.e.\ $\gamma\in\Gamma$.
Here, $\widetilde G$ denotes an ${\mathcal E}_\sharp$-quasi-continuous
version of $G$.

 (iii) Further assume that, for each $\Lambda\in \mathcal{B}_{0}(X)$,
\begin{equation}\label{12}
\int_\Lambda \gamma(dx)\, d(x,\gamma\setminus x)\in L^2(\Gamma,\mu),\quad
\int_\Lambda \nu(dx)\,b(x,\gamma)\in L^2(\Gamma,\mu),
\end{equation}
in the Glauber case, and
\begin{equation}\label{13}
\int_X\gamma(dx)\int _X \nu(dy)\, c(x,y,\gamma\setminus
x)(\chi_\Lambda(x)+\chi_\Lambda(y))\in L^2(\Gamma,\mu)
\end{equation}
in the Kawasaki case. Then $\mathcal{F}C_{\mathrm b}(C_{0}(X),\Gamma)\subset D(L_\sharp)$,
and for each $F\in \mathcal{F}C_{\mathrm b}(C_{0}(X),\Gamma)$, $L_{\sharp}F$
is given by formulas \eqref{1} and \eqref{2}, respectively.
\end{theorem}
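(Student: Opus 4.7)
The plan is to reduce the theorem to the construction carried out in \cite{LO}: the Papangelou identity \eqref{3} is the only structural input needed beyond the setting treated there, and the main work is to verify the symmetry of the bilinear forms $\mathcal{E}_\sharp$ on $\mathcal{F}C_{\mathrm b}(C_0(X),\Gamma)$. For the Glauber form, applying \eqref{3} to the integrand
\[
(x,\gamma)\mapsto d(x,\gamma\setminus x)(F(\gamma\setminus x)-F(\gamma))(G(\gamma\setminus x)-G(\gamma))
\]
converts $\mathcal{E}_{\mathrm G}(F,G)$ into the same expression with $\gamma(dx)\,d(x,\gamma\setminus x)$ replaced by $\nu(dx)\,b(x,\gamma)$ (via the balance relation \eqref{5}) and with $\gamma$ replaced by $\gamma\cup x$ in the differences; the resulting form is manifestly symmetric in $F$ and $G$. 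For the Kawasaki form I would similarly use \eqref{3} to rewrite the outer $\gamma(dx)$-integral as a $\nu(dx)$-integral weighted by $r(x,\gamma)$, and then exploit \eqref{4} together with Fubini to symmetrize in $(x,y)$; the role of \eqref{6} and the convention $\tfrac{0}{0}:=0$ is to ensure that the $\nu$-null sets on which $r$ vanishes do not spoil the balance. Integrability of the resulting expressions on cylinder functions is guaranteed by \eqref{8}--\eqref{9}.

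The closability of $(\mathcal{E}_\sharp,\mathcal{F}C_{\mathrm b}(C_0(X),\Gamma))$ in $L^{2}(\Gamma,\mu)$, together with the existence of a properly associated conservative Hunt process --- claim (i) --- would then follow from the construction in \cite{LO}, since that construction uses only the symmetry, positivity, and integrability bounds just established, together with the technical assumption \eqref{10} in the Kawasaki case (needed for quasi-regularity via a carr\'e-du-champ/Lyapunov estimate on the form). Claim (ii) is a consequence of the standard Dirichlet-form characterization of Hunt processes solving the martingale problem, which is also worked out in \cite{LO}.

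For (iii), I would verify directly that $\mathcal{E}_\sharp(F,G) = -(L_\sharp F,G)_{L^{2}(\mu)}$ for all $F,G\in\mathcal{F}C_{\mathrm b}(C_0(X),\Gamma)$ by expanding one of the two differences in the definition of $\mathcal{E}_\sharp$ and applying \eqref{3} once more, so as to reproduce formulas \eqref{1} and \eqref{2}. The additional bounds \eqref{12}, respectively \eqref{13}, provide exactly the $L^{2}$-control needed to justify the Fubini exchanges and to identify the candidate right-hand side as an element of $L^{2}(\Gamma,\mu)$; this gives $F\in D(L_\sharp)$ and the explicit formula for $L_\sharp F$.

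The main obstacle is the Kawasaki case, where one has to shift between integrations against $\gamma(dx)$, $\nu(dx)$, $(\gamma\setminus x)(dy)$ and $\nu(dy)$ several times, producing ratios of Papangelou intensities evaluated at different configurations; these must be absolutely integrable, which is precisely what \eqref{10} is designed to guarantee. Once that bookkeeping is done, nothing in the argument of \cite{LO} requires modification, and the theorem follows.
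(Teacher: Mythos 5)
Your proposal is correct and follows essentially the same route as the paper, whose proof simply invokes Theorems 3.1 and 3.2 of \cite{LO} after observing that their arguments use only the $(\Sigma'_{\nu})$ property of the point process rather than its determinantal structure; your sketch of the symmetry/closability/martingale steps is just an expansion of what that citation contains. The one point you omit is the paper's explicit caveat that condition \eqref{10} appears in \cite{LO} only for $v=1$, so one must note that Lemma~3.2 there generalizes to arbitrary $v\in\mathbb{R}$ --- a minor but flagged adjustment.
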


\begin{remark}
We refer to \cite{MR} for an explanation of notions appearing in
Theorem~\ref{asd}, see also a brief explanation of them in \cite{LO}.
\end{remark}

\begin{proof}[Proof of Theorem \ref{asd}]
 The statement follows from Theorems 3.1 and
3.2 in \cite{LO}. Note that, although these theorems are formulated
for determinantal point processes only, their proof only uses the
$(\Sigma'_{\nu})$ property of these point processes. Note also that
condition \eqref{10} is formulated in \cite{LO} only for $v=1$,
however the proof of Lemma~3.2 in \cite{LO} admits a straightforward generalization to the case of an arbitrary
$v\in \mathbb{R}$.
\end{proof}

\begin{remark}
Part (iii) of Theorem~\ref{asd} states that the operator
$(-L_{\sharp}, D(L_{\sharp}))$ is the Friedrichs' extention of the
operator $(-L_{\sharp}, \mathcal{F}C_{\mathrm b}(C_{0}(X), \Gamma))$ defined
by formulas \eqref{1}, \eqref{2}, respectively.
\end{remark}

Let us fix a parameter $s\in [0,1]$ and define
\begin{align}\label{14}
d(x,\gamma):&=r(x,\gamma)^{s-1}\chi_{\{r>0\}}(x,\gamma),\quad (x,
\gamma)\in X\times \Gamma,\\
\label{15}
b(x,\gamma):&=r(x,\gamma)^{s}\chi_{\{r>0\}}(x,\gamma), \quad (x,
\gamma)\in X\times \Gamma,
\end{align}
\begin{equation}\label{16}
\begin{gathered}
c(x,y,\gamma):=a(x,y)r(x,\gamma)^{s-1}r(y,\gamma)^{s}\chi_{\{r>0\}}(x,\gamma)\chi_{\{r>0\}}(y,\gamma),
\\
(x,y,\gamma)\in{X^{2}\times \Gamma}.
\end{gathered}
\end{equation}
Here the function $a:X^{2}\rightarrow [0,+\infty)$ is bounded,
measurable, symmetric (i.e., $a(x,y)=a(y,x)$), and satisfies
\begin{equation}\label{17}
\sup_{x\in X}\int_{X}a(x, y)\,\nu(dy)<\infty.
\end{equation}
Note that the balance conditions \eqref{5} and \eqref{4} are
satisfied for these coefficients, and so is condition \eqref{6}.

\begin{remark}\label{tesa4wa}
Note that, if $X=\mathbb R^d$ and $a(x,y)$ has the form $a(x-y)$ for a function $a:\mathbb R^d\to[0,\infty)$, then condition \eqref{17} means that $a\in L^1(\mathbb R^d,dx)$. (Here and below, in the case $X=\mathbb R^d$, we use an obvious abuse of notation.)
\end{remark}

\begin{remark}\label{ghdstsgcf}
Using representation \eqref{7}, we can rewrite formulas \eqref{14}--\eqref{16}  as follows:
\begin{align*}
d(x,\gamma\setminus x)&=\exp[(1-s)E(x, \gamma\setminus
x)]\chi_{\{E<+\infty\}}(x,\gamma\setminus x),\\
b(x,\gamma\setminus x)&=\exp[-sE(x,\gamma\setminus
x)]\chi_{\{E<+\infty\}}(x,\gamma\setminus x),\\
c(x,y,\gamma\setminus x)&=a(x,y)\exp[(1-s)E(x,\gamma\setminus
x)-sE(y,\gamma\setminus x)]\\
& \times\chi_{\{E<+\infty\}}(x,\gamma\setminus x)\chi_{\{E<+\infty\}}(y, \gamma\setminus x).
\end{align*}
So, if the corresponding dynamics exist, one can give the following
heuristic description  of them: Both dynamics are concentrated on
configurations $\gamma\in \Gamma$ such that, for each $x\in
\gamma$, the relative energy of interaction between $x$ and the rest
of configuration, $\gamma\setminus x$, is finite; those particles
tend to die, respectively hop, which have a high energy of
interaction with the rest of the configuration, while it is more
probable that a new particle is born at $y$, respectively $x$ hops
to $y$, if the energy of interaction between $y$ and the rest of the
configuration is low. \end{remark}

Let us assume that the point process $\mu$ satisfies:
$$\forall\Lambda\in \mathcal{B}_{0}(X):\quad
\int_{\Lambda}\gamma(dx)\in{L^{2}(\Gamma, \mu)}.$$ Then, by choosing
$u=1-s$ and $v=-s$ in \eqref{10}, we conclude that the coefficient
$c$ given by \eqref{16} satisfies \eqref{10}.

We will construct below a class of point processes
$\mu$ for which the coefficients $d$, $b$ and $c$ given above
satisfy the other conditions of Theorem \ref{asd}.

\section{Permanental point processes and corresponding equilibrium dynamics}
Let $K$ be a linear, bounded, self-adjoint operator on the real space
$L^{2}(X,\nu)$. Further assume that $K\geq 0$ and
$K$ is locally of trace class, i.e.,
$\operatorname{Tr}(P_{\Lambda}KP_{\Lambda})<\infty$ for all $\Lambda\in
\mathcal{B}_{0}(X)$, where $P_{\Lambda}$ denotes the operator of
multiplication by $\chi_{\Lambda}$. Hence, each operator
$P_{\Lambda}\sqrt{K}$ is of Hilbert--Schmidt class. Following \cite{LM}
(see also \cite[Lemma A.4]{GY}), we conclude that $\sqrt{K}$ is
an integral operator whose integral kernel, $\varkappa(x,y)$, satisfies
\begin{equation}\label{31}
\int_{\Lambda}\int_X\nu(dx)\nu(dy)\varkappa(x,y)^{2}<\infty \quad \text{for all
}\quad \Lambda\in \mathcal{B}_{0}(X).
\end{equation}
In particular,
\begin{equation}\label{32}
\varkappa(x,\cdot)\in L^{2}(X,\nu) \quad \text{for $\nu$-a.a.\quad $x\in X$.}
\end{equation}
Hence, $K$ is an integral operator whose integral kernel can be chosen as
\begin{equation}\label{33}
\begin{aligned}
k(x,y) & =\int_{X}\varkappa(x,z)\varkappa(z,y)\nu(dz)
\\
& =\int_{X}\varkappa(x,z)\varkappa(y,z)\nu(dz)
=(\varkappa(x,\cdot),\varkappa(y,\cdot))_{L^2(X,\nu)}.
\end{aligned}
\end{equation}
We also have, for each $\Lambda\in \mathcal{B}_{0}(X)$,
\begin{equation}\label{34}
\begin{aligned}
\operatorname{Tr}(P_{\Lambda}KP_{\Lambda})&=\|\sqrt{K}P_{\Lambda}\|^{2}_{\mathrm{HS}}
\\
&=\int_{\Lambda}\nu(dx)\int_{X}\nu(dy)\varkappa(x,y)^{2}
=\int_{\Lambda}k(x,x)\,\nu(dx),
\end{aligned}
\end{equation}
where $\|\cdot\|_{\mathrm{HS}}$ denotes the Hilbert--Schmidt norm.

\begin{prop}\label{37}
There exists a random field $(Y(x))_{x\in X}$ on a probability space
$(\Omega, \mathcal{A}, P)$ such that the mapping
\begin{equation}\label{35}
X\times \Omega \ni (x,\omega)\mapsto Y(x, \omega)
\end{equation}
is measurable, and for $\nu$-a.a.\ $x\in X$, $Y(x)$ is a  Gaussian random variable with mean $0$ and such that  \begin{equation}\label{36}
\mathbb{E}\left(Y(x)Y(y)\right)=k(x,y) \quad \text{for
$\nu^{\otimes 2}$-a.a.\ $(x,y)\in X^{2}$ and $\nu$-a.a.\ $x=y\in X$}.
\end{equation}
\end{prop}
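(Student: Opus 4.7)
The plan is to realize $Y(x)$ as $\xi(\varkappa(x,\cdot))$, where $\xi$ is a Gaussian white noise on $(X,\nu)$, and then to extract a jointly measurable version from its $L^{2}$-expansion along an orthonormal basis. Since $X$ is Polish (hence second countable) and $\nu$ is Radon, $L^{2}(X,\nu)$ is separable. I fix an orthonormal basis $(e_n)_{n\in\mathbb N}$ and, on some probability space $(\Omega,\mathcal A,P)$, a sequence $(g_n)_{n\in\mathbb N}$ of i.i.d.\ standard Gaussians, and I set $\xi(f):=\sum_n (f,e_n)_{L^{2}(X,\nu)}g_n$ for $f\in L^{2}(X,\nu)$. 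By \eqref{32} the function $\varkappa(x,\cdot)$ lies in $L^{2}(X,\nu)$ for $\nu$-a.a.\ $x$, so setting $Y(x):=\xi(\varkappa(x,\cdot))$ for such $x$ yields a centered Gaussian random variable whose covariance is $(\varkappa(x,\cdot),\varkappa(y,\cdot))_{L^{2}(X,\nu)}=k(x,y)$ by \eqref{33}.

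The only serious issue is to produce a jointly measurable version of $(x,\omega)\mapsto Y(x,\omega)$, since the above definition specifies $Y(x)$ only as an equivalence class in $L^{2}(P)$ for each fixed $x$. To this end, I introduce the partial sums
$$Y_N(x,\omega):=\sum_{n=1}^N \varkappa_n(x)\,g_n(\omega),\qquad \varkappa_n(x):=\int_X \varkappa(x,y)e_n(y)\,\nu(dy),$$
which are jointly measurable in $(x,\omega)$ as finite sums of products of measurable functions. Since $X$ is locally compact and second countable it is $\sigma$-compact, so one can choose $\Lambda_m\uparrow X$ with $\Lambda_m\in\mathcal B_0(X)$. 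Parseval gives $\sum_n \varkappa_n(x)^{2}=\int_X \varkappa(x,y)^{2}\,\nu(dy)$ for $\nu$-a.a.\ $x$, hence by \eqref{31},
$$\sum_{n=1}^\infty \int_{\Lambda_m}\varkappa_n(x)^{2}\,\nu(dx)=\int_{\Lambda_m}\!\!\int_X \varkappa(x,y)^{2}\,\nu(dy)\,\nu(dx)<\infty.$$
Since $\mathbb E[(Y_N(x)-Y_M(x))^{2}]=\sum_{n=\min(N,M)+1}^{\max(N,M)}\varkappa_n(x)^{2}$, Fubini shows that $(Y_N)$ is Cauchy in $L^{2}(\Lambda_m\times\Omega,\nu\otimes P)$ for every $m$, and a diagonal extraction produces a subsequence $(Y_{N_k})$ converging $\nu\otimes P$-a.e.\ on $X\times\Omega$. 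I define $Y$ as the pointwise limit on the convergence set and set $Y:=0$ off it, which gives the desired jointly measurable random field.

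It remains to verify \eqref{36}. For $\nu$-a.a.\ $x$ the subsequence $Y_{N_k}(x,\cdot)$ converges both $P$-a.s.\ and in $L^{2}(P)$ to $\xi(\varkappa(x,\cdot))$, so $Y(x)$ is centered Gaussian with variance $\sum_n \varkappa_n(x)^{2}=k(x,x)$, which is the diagonal part of \eqref{36}. For the off-diagonal part, $\mathbb E[Y_{N_k}(x)Y_{N_k}(y)]=\sum_{n=1}^{N_k}\varkappa_n(x)\varkappa_n(y)$, and passing to the limit (using Cauchy--Schwarz and the $L^{2}(P)$-convergence, valid on the full-measure set where $\sum_n\varkappa_n(\cdot)^{2}<\infty$) yields $\mathbb E[Y(x)Y(y)]=(\varkappa(x,\cdot),\varkappa(y,\cdot))_{L^{2}(X,\nu)}=k(x,y)$ for $\nu^{\otimes 2}$-a.a.\ $(x,y)$, by \eqref{33}. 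The expected main obstacle, and the one I would take most care with, is the measurability step: one must avoid confusing the per-$x$ $L^{2}(P)$-limit with a pointwise limit and must instead use the joint-$L^{2}$ Cauchy estimate above to extract a jointly a.e.\ convergent subsequence.
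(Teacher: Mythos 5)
Your proof is correct and follows essentially the same route as the paper: both realize $Y(x)$ as the image of $\varkappa(x,\cdot)$ under an isometry of $L^{2}(X,\nu)$ into a Gaussian $L^{2}$-space, and both obtain joint measurability by working with the tensorized isometry on $L^{2}(\Lambda\times X,\nu^{\otimes 2})$ — your jointly-$L^{2}$-Cauchy partial sums and a.e.-convergent subsequence are just an explicit coordinate version of the paper's operator $I_{\Lambda}=\mathbf 1_{\Lambda}\otimes I$ together with the choice of a measurable representative. The only cosmetic difference is that you use an i.i.d.\ Gaussian series on an abstract probability space where the paper uses the standard Gaussian measure on the negative space of a rigged Hilbert triple.
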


\begin{remark}\label{cfdxrbv}
The statement of Proposition \ref{37} is well-known if the integral
kernel of the operator $K$ admits a continuous version (see e.g.\
Theorem~1.8 and p.~456 in \cite{ST}).  In the latter case,
$(Y(x))_{x\in X}$ is a Gaussian random field and formula \eqref{36}
holds for all $(x,y)\in X^{2}$.
\end{remark}

\begin{proof}[Proof of Proposition \ref{37}] Consider  a standard triple of real Hilbert spaces $$H_{+}\subset
H_{0}=L^{2}(X,\nu)\subset H_{-}\,.$$ Here the Hilbert space $H_{+}$ is
densely and continuously embedded into $H_{0}$, the inclusion
operator $H_{+}\hookrightarrow H_{0}$ is of Hilbert--Schmidt class, and the
Hilbert space $H_{-}$ is the dual space of $H_{+}$ with respect to the
center space $H_{0}$ (see e.g.\ \cite{BK}).

Let $\mathbb{P}$ be the standard Gaussian measure on $H_{-}$, i.e.,
the probability measure on the Borel $\sigma$-algebra
$\mathcal{B}(H_{-})$ which has Fourier transform
$$\int_{H_{-}}e^{i\langle \omega,
f\rangle}\,\mathbb{P}(d\omega)=\exp\Big[-\frac{1}{2}\|f\|
^{2}_{H_{0}}\Big],\quad f\in H_{+}\,,$$ where $\langle \omega, f\rangle$
denotes the dual pairing between $\omega\in H_{-}$ and $f\in
H_{+}$\,.  Then the mapping $H_{+}\ni f\rightarrow
\langle \cdot, f\rangle$ can be extended by continuity to an
isometry
\begin{equation}\label{38}
I: H_{0}\rightarrow L^{2}(H_{-},\mathbb{P}).
\end{equation}
For any $f\in H_{0}$ we denote $\langle \cdot,
f\rangle:=If$. Thus, for each $f\in H_{0}$, $\langle
\cdot, f\rangle$ is a (complex) Gaussian random variable with mean $0$  and
for any $f,g\in H_{0}$
\begin{equation}\label{39}
\int_{H_{-}}\langle \omega,f\rangle\langle
\omega,g\rangle\,\mathbb P(d\omega)=(f,g)_{L^2(X,\nu)}.
\end{equation}
Thus, by \eqref{32}, we set for $\nu$-a.a. $x\in X$,
$\widetilde{Y}(x,\omega):=\langle\omega, k(x,\cdot)\rangle$. Hence
$\widetilde{Y}(x)$ is a Gaussian random variable and by \eqref{33}
 and \eqref{39}, \eqref{36} holds.

Hence, it remains to prove that there exists a random field
$Y=(Y(x))_{x\in X}$ for which the mapping \eqref{35} is measurable
and such that $Y(x,\omega)=\widetilde{Y}(x,\omega)$  for
$\nu\otimes\mathbb{P}$-a.a.\ $(x,\omega)$. To this end, we fix any
$\Lambda\in \mathcal{B}_{0}(X)$ and denote by $\mathcal{B}(\Lambda)$
the trace $\sigma$-algebra of $\mathcal{B}(X)$ on $\Lambda$. We
define a set $\mathcal{D}_{\Lambda}$ of the functions
$u:\Lambda\times X\rightarrow \mathbb{R}$ of the form
\begin{equation}\label{40}
u(x,y)=\sum^{n}_{i=1}\chi_{\Delta_{i}}(x)f_{i}(y),
\end{equation}
where $\Delta_{i}\in \mathcal{B}(\Lambda)$, $f_{i}\in
H_{+}$, $i=1,\dots, n$. Define a linear mapping
\begin{align}\label{41}
I_{\Lambda}:\mathcal{D}_{\Lambda}\rightarrow
L^{2}(\Lambda\times H_{-}, \nu\otimes \mathbb{P})
\end{align}
by setting, for each $u\in \mathcal{D}_{\Lambda}$ of the form
\eqref{40},
$$
(I_{\Lambda}u)(x,\omega)=\sum^{n}_{i=1}\chi_{\Delta_{i}}(x)\langle\omega,f_{i}\rangle,\quad
(x,\omega)\in \Lambda\times H_{-}\,.
$$
Clearly, $I_{\Lambda}$ can be extended to an isometry
$$
I_{\Lambda}: L^{2}(\Lambda\times X, \nu^{\otimes
2})\rightarrow L^{2}(\Lambda\times H_{-}, \nu\otimes
\mathbb{P}),$$ and we have $I_{\Lambda}=\mathbf 1_{\Lambda}\otimes I$, where
$\mathbf 1_{\Lambda}$ is the identity operator in
$L^{2}(\Lambda, \nu)$ and the operator $I$ is as in
\eqref{38}.

Fix any $u\in  L^{2}(\Lambda\times X, \nu^{\otimes
2})$. As easily seen, there exist a sequence
$(u_{n})_{n=1}^{\infty}\subset \mathcal{D}_{\Lambda}$ such that
$u_n\to u$ in $L^2(\Lambda\times X,
\nu^{\otimes2})$ and  for
$\nu$-a.a. $x\in \Lambda$, $u_{n}(x,\cdot)\rightarrow u(x,\cdot)$ in
$L^{2}(X, \nu)$
Hence, for
$\nu$-a.a.\ $x\in \Lambda$,
$I_{\Lambda}u_{n}(x,\cdot)\rightarrow I_{\Lambda}u(x,\cdot)$ in
$L^{2}(H_{-},\mathbb{P})$, which implies
\begin{equation}\label{gh}
(I_{\Lambda}u)(x,\omega)=\langle \omega, u(x,\cdot)\rangle
\quad\text{for  $\mathbb{P}$-a.a.\  $\omega\in H_{-}$\,.}
\end{equation}

Now, denote by $\varkappa_{\Lambda}$ the restriction of $\varkappa$ to the set
$\Lambda\times X$. For $\nu$-a.a.\ $x\in \Lambda$, we define
$Y_{\Lambda}(x):=(I_{\Lambda}\varkappa_{\Lambda})(x,\cdot)$. Hence, by
\eqref{gh}, for $\nu$-a.a.\ $x\in \Lambda$,
$Y_{\Lambda}(x)=\widetilde{Y}(x)$ $\mathbb{P}$-a.e. Finally, let
$(\Lambda_{n})_{n=1}^{\infty}\subset \mathcal{B}_{0}(X)$ be such
that $\Lambda_{n}\cap \Lambda_{m}=\emptyset$ if $n\neq m$ and
$\bigcup^{\infty}_{n=1}\Lambda_{n}=X$. Setting
$Y(x):=Y_{\Lambda_{n}}(x)$ for $\nu$-a.a.\ $x\in \Lambda_{n}, n\in
\mathbb{N}$, we conclude  the statement.\end{proof}

Let $Y$ be a random field as in Proposition \ref{37}. For each
$\Lambda\in \mathcal{B}_{0}(X)$, we have
\begin{align*}
\mathbb{E}\left(\int_{\Lambda}Y(x)^{2}\,\nu(dx)\right)&=\int_{\Lambda}\mathbb E(Y(x)^{2})\,\nu(dx)\\
&=\int_{\Lambda}\nu(dx)\int_{X}\nu(dy)\varkappa(x,y)^{2}<\infty.
\end{align*}
In particular, the function $Y(x)^{2}$ is locally $\nu$-integrable
$\mathbb P$-a.s. Let $l\in\mathbb{N}$ and let $(\Omega, \mathcal{A}, \mathbb P)$ be
a probability space on which $l$ independent copies $Y_{1},
Y_{2},\ldots, Y_{l}$ of a random field $Y$ as in Proposition
\ref{37} are defined. Denote by $\mu^{(l)}$ the Cox point process on
$X$ with random intensity $g^{(l)}(x)=\sum^{l}_{i=1}Y_{i}(x)^{2}$,
which is locally $\nu$-integrable $\mathbb{P}$-a.s. Thus,
$\mu^{(l)}$ is the probability measure on $(\Gamma,
\mathcal{B}(\Gamma))$ which satisfies
\begin{equation}\label{44}
\int_{\Gamma}\mu^{(l)}(d\gamma)F(\gamma)=\int_{\Omega}\mathbb{P}(d\omega)\int_{\Gamma}\pi_{{g}^{(l)}(x,\omega)\nu(dx)}(d\gamma)F(\gamma)
\end{equation}
for each measurable function $F:\Gamma\rightarrow [0,+\infty]$.
Here, for a locally $\nu$-integrable function $g: X\rightarrow
[0,+\infty)$, we denote by $\pi_{g(x)\nu(dx)}$ the Poisson point
process in $X$ with intensity measure $g(x)\nu(dx)$, see
e.g\ \cite{DVJ}. This is the unique point
process in $X$ which satisfies the Mecke identity
\begin{equation}\label{45}
\int_{\Gamma}\pi_{g(x)\nu(dx)}(d\gamma)\,\int_{X}\gamma(dx)F(x,\gamma)=\int_{\Gamma}\pi_{g(x)\nu(dx)}(d\gamma)\int_{X}\nu(dx)\,g(x)F(x,\gamma\cup
x)
\end{equation}
for each measurable $F: X\times\Gamma \rightarrow [0,+\infty]$. By
\eqref{44} and \eqref{45} (compare with e.g.\ \cite{MW}), for each
$l\in \mathbb{N}$, the point process $\mu^{(l)}$ satisfies condition
$(\Sigma'_{\nu})$ and its Papangelou intensity is given by
\begin{align}
r^{(l)}(x,\gamma)=\widetilde{\mathbb{E}}(g^{(l)}(x)\mid\mathcal{F})(\gamma)
=\widetilde{\mathbb{E}}\Big(\sum^{l}_{i=1}Y_{i}(x)^{2}\mid\mathcal{F}\Big)(\gamma).\label{46}
\end{align}
Here $\widetilde{\mathbb{E}}$ denotes the (conditional) expectation
with respect to the probability measure
\begin{align}\label{47}
\widetilde{\mathbb{P}}(d\omega,d\gamma)=\widetilde{\mathbb{P}}(d\omega)\,\pi_{g^{(l)}(x,\omega)\nu(dx)}(d\gamma)
\end{align}
on  $\Omega\times\Gamma$, while $\mathcal{F}$ denotes the
$\sigma$-algebra on $\Omega\times\Gamma$ generated by the mappings
$$\Omega\times\Gamma\ni(\omega,\gamma)\rightarrow F(\gamma)\in
\mathbb{R},$$ where $F: \Gamma\rightarrow \mathbb{R}$ is measurable.

Recall that a point process $\mu$ in $X$ is said to have correlation
functions if, for each $n\in\mathbb{N}$, there exist a
non-negative, measurable, symmetric function $k_{\mu}^{(n)}$ on
$X^{n}$ such that, for any measurable, symmetric function $f^{n}:
X^{n}\rightarrow [0,+\infty]$,
\begin{equation}\label{48}
\begin{aligned}
&\int_{\Gamma}\sum_{\{x_{1},\dots,x_{n}\}\subset
\gamma}f^{(n)}(x_{1},\dots,x_{n})\,\mu(d\gamma)
\\
&\quad=\frac{1}{n!}\int_{X^{n}}f^{(n)}(x_{1},\dots,x_{n})k_{\mu}^{(n)}(x_{1},\dots,x_{n})\nu(dx_{1})\dotsm\nu(dx_{n}).
\end{aligned}
\end{equation}
As well known (e.g.\ \cite{DVJ}), for a locally $\nu$-integrable
function $g: X\rightarrow [0,+\infty)$, the Poisson point process
$\pi_{g(x)\nu(dx)}$ has correlation functions
\begin{align}\label{49}
k^{(n)}_{\mu}(x_{1},\ldots,x_{n})=g(x_{1})\dotsm g(x_{n}).
\end{align}

Let us recall the notion of $\alpha$-permanent \cite{VJ}, called
$\alpha$-determinant in \cite{ST}. For a square matrix
$A=(a_{ij})^{n}_{i,j=1}$ and $\alpha\in\mathbb{R}$, we set
$$\operatorname{per}_\alpha A:=\sum_{\sigma\in S_{n}}\alpha^{n-m(\sigma)}\prod^{n}_{i=1}a_{i\sigma(i)},$$
where $S_{n}$ is the group of all permutations of $\{1, \dots, n\}$ and
$m(\sigma)$ denotes the number of cycles in $\sigma$. In particular,
$\operatorname{per}_{1}A$ is the usual permanent of $A$, while
$\operatorname{per}_{-1}A$ is the usual determinant of $A$.
Analogously to \cite[subsec.\ 6.4]{ST}, we conclude from \eqref{44},
\eqref{48} and \eqref{49} that the point process $\mu^{(l)}$ has
correlation functions
\begin{equation}\label{50}
k_{\mu^{(l)}}^{(n)}(x_{1}, \dots,
x_{n})=\operatorname{per}_{\frac{l}{2}}(lk(x_{i},x_{j}))^{n}_{i,j=1}\quad
\text{for $\nu^{\otimes n}$-a.a.\ $(x_{1}, \dots, x_{n})\in X^{n}$.}
\end{equation}
For $l=2$, the point process $\mu^{(2)}$ is often called a boson point process, see e.g.\ \cite{DVJ,LM}. Thus, we have
proved the following

\begin{prop}\label{as}
For each $l\in \mathbb{N}$, there exists a point process $\mu^{(l)}$
in $X$ whose correlation functions are given by \eqref{50}. The
$\mu^{(l)}$ satisfies condition $(\Sigma'_{\nu})$ and its Papangelou
intensity is given by \eqref{46}.
\end{prop}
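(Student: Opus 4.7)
The proof plan has three steps, and much of the scaffolding has already been laid out in the paragraphs preceding the statement. First I would fix $l \in \mathbb{N}$ and, applying Proposition~\ref{37} on a product probability space $(\Omega,\mathcal{A},\mathbb{P})$, obtain $l$ independent copies $Y_1,\dots,Y_l$ of the random field $Y$. The intensity $g^{(l)}(x,\omega) = \sum_{i=1}^l Y_i(x,\omega)^2$ is then jointly measurable, and by Fubini combined with the computation of $\mathbb{E}\int_\Lambda Y(x)^2\,\nu(dx)$ already carried out before Proposition~\ref{as}, it is $\mathbb{P}$-a.s.\ locally $\nu$-integrable. I would then \emph{define} $\mu^{(l)}$ through formula \eqref{44}; well-definedness as a probability measure on $(\Gamma,\mathcal{B}(\Gamma))$ follows from the standard fact that $\omega \mapsto \pi_{g^{(l)}(x,\omega)\nu(dx)}(A)$ is measurable for each $A \in \mathcal{B}(\Gamma)$.

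Second, to verify condition $(\Sigma'_{\nu})$ and identify the Papangelou intensity, I would take any measurable $F: X\times\Gamma \to [0,+\infty]$, insert $\int_X \gamma(dx)F(x,\gamma)$ into \eqref{44}, and apply the Mecke identity \eqref{45} to the inner Poisson integral, producing
\begin{equation*}
\int_{\Omega}\mathbb{P}(d\omega)\int_{\Gamma}\pi_{g^{(l)}(x,\omega)\nu(dx)}(d\gamma)\int_X \nu(dx)\,g^{(l)}(x,\omega)F(x,\gamma\cup x).
\end{equation*}
Using Fubini to exchange $\int_\Omega\mathbb{P}(d\omega)$ with $\int_X\nu(dx)$ and recognizing the resulting $\omega$-integral as a conditional expectation of $g^{(l)}(x,\cdot)$ with respect to the sub-$\sigma$-algebra $\mathcal{F}$ under the joint measure $\widetilde{\mathbb{P}}$ of \eqref{47}, I obtain exactly \eqref{3} with $r^{(l)}$ given by \eqref{46}.

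Third, for the correlation functions I would combine \eqref{44} with \eqref{48}--\eqref{49} for the Poisson process $\pi_{g^{(l)}(x,\omega)\nu(dx)}$, which immediately yields
\begin{equation*}
k^{(n)}_{\mu^{(l)}}(x_1,\ldots,x_n)=\mathbb{E}\Big[\prod_{j=1}^{n}g^{(l)}(x_j)\Big]=\mathbb{E}\Big[\prod_{j=1}^{n}\sum_{i=1}^{l}Y_i(x_j)^{2}\Big]
\end{equation*}
for $\nu^{\otimes n}$-a.a.\ $(x_1,\dots,x_n) \in X^n$. Expanding the product reduces the task to $2n$-point Gaussian moments, which by the Isserlis--Wick theorem decompose as a sum over complete pairings of products of covariances $k(x_i,x_j)$. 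Organizing the pairings by their cycle structure in $S_n$ and tracking the combinatorial factor $l^{m(\sigma)}$ arising from summing over the index $i \in \{1,\dots,l\}$ along each cycle, one matches the definition of $\mathrm{per}_\alpha$ with $\alpha = l/2$ and recovers \eqref{50}. This is exactly the computation performed in \cite[subsec.~6.4]{ST} in the continuous-kernel case, and it transfers verbatim here since all identities are required only $\nu^{\otimes n}$-a.e.

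The genuine obstacle is step three: the Wick-expansion computation must remain valid even though the Gaussian variables $Y_i(x)$ from Proposition~\ref{37} are only defined $\nu$-almost everywhere rather than everywhere on $X$. This is precisely why Proposition~\ref{37} was stated with the covariance identity \eqref{36} holding both $\nu^{\otimes 2}$-a.e.\ off the diagonal and $\nu$-a.e.\ on the diagonal, so that every covariance $k(x_i,x_j)$ appearing in the Wick sum (including the diagonal entries $k(x_i,x_i)$) is meaningful for $\nu^{\otimes n}$-a.a.\ choices of arguments. The verifications in steps one and two are then essentially routine applications of Fubini and the Mecke identity.
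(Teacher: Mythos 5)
Your proposal is correct and takes essentially the same route as the paper, whose proof is precisely the construction laid out in the paragraphs preceding the proposition: the Cox process \eqref{44} directed by $g^{(l)}=\sum_{i=1}^{l}Y_i^2$, the Mecke identity \eqref{45} combined with the conditional-expectation identification of the Papangelou intensity \eqref{46}, and the Wick/cycle-counting computation of \cite[subsec.~6.4]{ST} for the correlation functions. Your attention to the $\nu$-a.e.\ definition of the fields (the reason \eqref{36} is recorded both off and on the diagonal in Proposition~\ref{37}) is exactly the point the paper relies on; note only that if you carry the Wick expansion out explicitly (e.g.\ for $n=2$ one gets $l^2k(x,x)k(y,y)+2l\,k(x,y)^2$), the parameter of the $\alpha$-permanent of the matrix $(lk(x_i,x_j))$ comes out as $2/l$ rather than $l/2$ — the two agree only in the boson case $l=2$ — so the normalization in \eqref{50} deserves a second look.
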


\begin{remark}
Recall that in \cite{ST}, under the same assumptions on the operator
$K$, the existence of a point process with correlation functions
\eqref{50} was proved for even $l\in \mathbb{N}$, and for odd $l\in
\mathbb{N}$ the statement of Proposition~\ref{as} was proved under the
additional assumption of continuity of the integral kernel
$k(\cdot,\cdot)$.
\end{remark}

We will now prove that, for a point process $\mu^{(l)}$ as in
Proposition~\ref{as},  Glauber and Kawasaki dynamics with coefficients
\eqref{14}, \eqref{15} and \eqref{16}, respectively exist.

\begin{theorem}\label{asdfg}
(i) For each point process $\mu^{(l)}$ as in Proposition~\ref{as}, the
coefficients $d(x,\gamma)$ and $b(x,\gamma)$ defined by \eqref{14}
and \eqref{15}, satisfy conditions \eqref{5} and \eqref{8} and so
statements  (i) and (ii) of Theorem \ref{asd} hold, in particular, a
corresponding Glauber dynamics exists.

(ii) Assume additionally that $k(x,x)$ is bounded outside a set
$\Delta\in\mathcal{B}_0(X)$. Then for a point process $\mu^{(l)}$
as in Proposition~\ref{as}, the coefficient $c(x,y,\gamma)$ defined by \eqref{16}, satisfies
\eqref{4}, \eqref{6}, \eqref{9} and \eqref{10}, and so statements
(i) and (ii) of Theorem~\ref{asd} hold, in particular, a
corresponding Kawasaki dynamics exists.
\end{theorem}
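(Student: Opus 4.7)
My plan is to invoke Theorem~\ref{asd} with $\mu=\mu^{(l)}$ and the coefficients \eqref{14}--\eqref{16}. The balance identities \eqref{5}, \eqref{4} and the vanishing condition \eqref{6} follow at once from the definitions, so the real task is verifying \eqref{8} for part~(i), and \eqref{9} together with \eqref{10} for part~(ii). My common tools will be the Papangelou identity \eqref{3}, the explicit formula $r^{(l)}(x,\gamma)=\widetilde{\mathbb E}(g^{(l)}(x)\mid\mathcal F)(\gamma)$ from \eqref{46}, and second-moment estimates for $g^{(l)}(x)=\sum_{i=1}^{l}Y_{i}(x)^{2}$ controlled by $k(x,x)$ through \eqref{34}.

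For \eqref{8}, I will apply \eqref{3} to rewrite the integral as $\int_{\Gamma}\mu^{(l)}(d\gamma)\int_{\Lambda}\nu(dx)\,r^{(l)}(x,\gamma)^{s}$ and use the elementary bound $t^{s}\le 1+t$ valid for $s\in[0,1]$ and $t\ge 0$. A second use of \eqref{3} reduces the remaining term to $\int_{\Gamma}\gamma(\Lambda)\mu^{(l)}(d\gamma)$, which by \eqref{48}, \eqref{50} and \eqref{34} equals $l\int_{\Lambda}k(x,x)\nu(dx)<\infty$.

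For \eqref{9}, the analogous Papangelou reduction (combined with its counterpart after interchanging the roles of $x$ and $y$) reduces the task to bounding $\int_{\Gamma}\mu^{(l)}(d\gamma)\int_{\Lambda}\nu(dx)\int_{X}\nu(dy)\,a(x,y)\,r^{(l)}(x,\gamma)^{s}r^{(l)}(y,\gamma)^{s}$. I will use $(tu)^{s}\le 1+tu$ for $s\in[0,1]$ and Cauchy--Schwarz in $\gamma$, then Jensen together with the Wick identity $\widetilde{\mathbb E}[g^{(l)}(x)^{2}]=(l^{2}+2l)k(x,x)^{2}$, to further reduce matters to finiteness of $\int_{\Lambda}\nu(dx)\int_{X}\nu(dy)\,a(x,y)k(x,x)k(y,y)$. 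Splitting the inner $y$-integration at $\Delta$: on $\Delta$ I will use the boundedness of $a$ and the local trace class of $K$ on $\Delta$, while on $\Delta^{c}$ I will use the boundedness of $k(y,y)$ assumed in~(ii) together with \eqref{17} and, once again, $\int_{\Lambda}k(x,x)\nu(dx)<\infty$.

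The main obstacle will be \eqref{10}. Here the exponents $u=1-s$, $v=-s$ are tailored so that, with the convention $0/0:=0$, the $r$-factors in $r(x,\gamma\setminus x)^{1-s}r(y,\gamma\setminus x)^{-s}c(x,y,\gamma\setminus x)$ collapse pairwise, leaving only $a(x,y)\chi_{\{r>0\}}(x,\gamma\setminus x)\chi_{\{r>0\}}(y,\gamma\setminus x)$. Once this algebraic cancellation is spotted, the inner double integral is dominated pointwise by $\gamma(\Lambda)\sup_{x\in X}\int_{X}a(x,y)\nu(dy)$, which is finite by \eqref{17}, so \eqref{10} becomes the assertion $\gamma(\Lambda)\in L^{2}(\Gamma,\mu^{(l)})$. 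I will verify this by expanding $\int\gamma(\Lambda)^{2}\mu^{(l)}(d\gamma)$ via \eqref{48} and \eqref{50}, bounding the resulting second correlation function by means of the Cauchy--Schwarz estimate $k(x,y)^{2}\le k(x,x)k(y,y)$ that follows from \eqref{33}, and then closing the argument with the local trace class property \eqref{34}.
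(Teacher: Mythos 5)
Your proposal is correct and follows essentially the same route as the paper: the Papangelou identity \eqref{3} to trade $\gamma(dx)$ for $r(x,\gamma)\,\nu(dx)$, the bound $t^{s}\le 1+t$, Cauchy--Schwarz plus Jensen and Gaussian moments to get $\|r(x,\cdot)\|_{L^{2}(\mu)}\lesssim k(x,x)$ (the paper's Lemma~\ref{serts}), the split of the $y$-integration at $\Delta$, and the $u=1-s$, $v=-s$ cancellation for \eqref{10}. The only (harmless) deviations are your use of $(tu)^{s}\le 1+tu$ in place of the paper's $(1+t)(1+u)$, and your explicit verification that $\gamma(\Lambda)\in L^{2}(\Gamma,\mu^{(l)})$ via the second correlation function, a point the paper leaves implicit.
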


\begin{proof} We start with the following

\begin{lem}\label{serts} For each $n\in\mathbb N$ and for $\nu$-a.a.\ $x\in X$
\begin{equation}\label{trstr}
\int_\Gamma r(x,\gamma)^n\,\mu(d\gamma)\le\frac{(2n)!}{2^n\,n!}\,k(x,x)^n.
\end{equation}
\end{lem}

\begin{proof}
Using Jensen's inequality for conditional expectation and the formula for moments of a Gaussian measure
(see e.g.\ \cite[Chapter~2, Section~2, Lemma~2.1]{BK}), we have
\begin{align*}
\int_\Gamma r(x,\gamma)^n\,\mu(d\gamma)&=
\widetilde{\mathbb E}(\widetilde{\mathbb E}(Y(x)^2\mid\mathcal F)^n)
\le\widetilde{\mathbb E}(\widetilde{\mathbb E}(
Y(x)^{2n}\mid\mathcal F))
\\
&=\widetilde{\mathbb E}(Y(x)^{2n})
\le\frac{(2n)!}{2^n\,n!}\,\|\varkappa(x,\cdot)\|^{2n}_{L^2(X,\nu)}
=\frac{(2n)!}{2^n\,n!}\,k(x,x)^n
\end{align*}
for $\nu$-a.a.\ $x\in X$. \end{proof}

We will only prove statement (ii) of
Theorem~\ref{asdfg}, as the proof of statement (i) is similar and
simper. Also, for simplicity of notation, we will only consider the
case $l=1$ (for $l>1$ the proof being similar). We will also
omit the upper index (1) from our notation. By \eqref{3} we have, for
each $\Lambda\in \mathcal{B}_{0}(X)$,
\begin{equation}\label{55}
\begin{aligned}
&\int_{\Gamma}\mu(d\gamma)\int_{X}\gamma(dx)\int_{X}\nu(dy)\,c(x,y,\gamma\setminus
x)(\chi_{\Lambda}(x)+\chi_{\Lambda}(y))
\\
&\quad=\int_{\Gamma}\mu(d\gamma)\int_{X}\nu(dx)\int_{X}\nu(dy)\,r(x,\gamma)c(x,y,\gamma)(\chi_{\Lambda}(x)
+\chi_{\Lambda}(y))
\\
&\quad=\int_{\Gamma}\mu(d\gamma)\int_{X}\nu(dx)\int_{X}\nu(dy)\,a(x,y)r(x,\gamma)^{s}r(y,\gamma)^{s}
\chi_{\{r>0\}}(x,\gamma)
\\
&\quad\times\chi_{\{r>0\}}(y,\gamma)(\chi_{\Lambda}(x)+\chi_{\Lambda}(y))
\\
& \quad \leq\int_{\Gamma}\mu(d\gamma)\int_{X}\nu(dx)\int_{X}\nu(dy)\,a(x,y)r(x,\gamma)^{s}r(y,\gamma)^{s}(\chi_{\Lambda}(x)
+\chi_{\Lambda}(y))
\\
& \quad =2\int_{\Gamma}\mu(d\gamma)\int_{\Lambda}\nu(dx)\int_{X}\nu(dy)\,a(x,y)r(x,\gamma)^{s}r(y,\gamma)^{s}
\\
&\quad \leq2\int_{\Gamma}\mu(d\gamma)\int_{\Lambda}\nu(dx)\int_{X}\nu(dy)\,a(x,y)(1+r(x,\gamma))(1+r(y,\gamma)).
\end{aligned}
\end{equation}
By \eqref{17}
\begin{align}\label{56}
\int_{\Gamma}\mu(d\gamma)\int_{\Lambda}\nu(dx)\int_{X}\nu(dy)\,a(x,y)<\infty.
\end{align}
Below, $C_{i}, i=1,2,3,\dots$,  will denote positive constants
whose explicit values are not important for us. We have, by \eqref{17}
\begin{equation}\label{57}
\begin{aligned}
&\int_{\Gamma}\mu(d\gamma)\int_{\Lambda}\nu(dx)\int_{X}\nu(dy)\,a(x,y)r(x,\gamma)
\\
&\quad=\int_{\Gamma}\mu(d\gamma)\int_{\Lambda}\nu(dx)\,r(x,\gamma)\Big(\int_{X}\nu(dy)\,a(x,y)\Big)
\\
&\quad\leq C_{1}\int_{\Gamma}\mu(d\gamma)\int_{\Lambda}\nu(dx)r(x,\gamma)
\\
&\quad=C_{1}\int_{\Gamma}\mu(d\gamma)\int_{\Lambda}\gamma(dx)=C_{1}\int_{\Lambda}k(x,x)\,\nu(dx)<\infty.
\end{aligned}
\end{equation}
Next, by \eqref{46}
\begin{equation}\label{58}
\begin{aligned}
&\int_{\Gamma}\mu(d\gamma)\int_{\Lambda}\nu(dx)\int_{X}\nu(dy)\,a(x,y)r(y,\gamma)
\\
&\quad =\int_{\Lambda}\nu(dx)\int_{X}\nu(dy)\,a(x,y)\int_{\Gamma}\mu(d\gamma)r(y,\gamma)
\\
&\quad =\int_{\Lambda}\nu(dx)\int_{X}\nu(dy)\,a(x,y)k(y,y)
\\
&\quad =\int_{\Lambda}\nu(dx)\int_{\Delta}\nu(dy)\,a(x,y)k(y,y)+
\int_{\Lambda}\nu(dx)\int_{\Delta^c}\nu(dy)\,a(x,y)k(y,y)
\\
&\quad\le C_2\int_{\Lambda}\nu(dx)\int_{\Delta}\nu(dy)k(y,y)+C_3\int_{\Lambda}\nu(dx)\int_{\Delta^c}\nu(dy)\,a(x,y)<\infty,
\end{aligned}
\end{equation}
where we used that the function $a$ is bounded and $k(y,y)$ is bounded
on ${\Delta}^{c}$.
Analogously,  using Lemma \ref{serts}, we have
\begin{equation}\label{rtser5s}
\begin{aligned}
&\int_{\Gamma}\mu(d\gamma)\int_{\Lambda}\nu(dx)\int_{X}\nu(dy)a(x,y)r(x,\gamma)r(y,\gamma)
\\
&\quad\leq\int_{\Lambda}\nu(dx)\int_{X}\nu(dy)\,a(x,y)\|r(x,\cdot)\|_{L^{2}(\mu)}\,\|r(y,\cdot)\|_{L^{2}(\mu)}
\\
&\quad\leq C_{4}\int_{\Lambda}\nu(dx)\int_{X}\nu(dy)\,a(x,y)k(x,x)k(y,y)
\\
&\quad\leq
C_{5}\int_{\Lambda}\nu(dx)\,k(x,x)\int_{\Delta}\nu(dy)\,k(y,y)
\\
&\quad+C_{6}\int_{\Lambda}\nu(dx)k(x,x)\int_{{\Delta}^{c}}\nu(dy)\,a(x,y)<\infty.
\end{aligned}
\end{equation}
Thus, by \eqref{55}--\eqref{rtser5s}, the theorem is proven.
\end{proof}

\begin{theorem}\label{sdisd} (i) Let $s\in\left[\frac12,1\right]$, and let the conditions of Theorem~\ref{asdfg} (i) be satisfied. Then the
coefficients $d(x,\gamma)$ and $b(x,\gamma)$ defined by \eqref{14}
and \eqref{15}, satisfy condition \eqref{12}. Thus,
 $\mathcal{F}C_{\mathrm b}(C_{0}(X),\Gamma)\subset D(L_{\mathrm G})$,
and for each $F\in \mathcal{F}C_{\mathrm b}(C_{0}(X),\Gamma)$, $L_{\mathrm G}F$
is given by formula \eqref{1}.

(ii) Let $s\in\left[\frac12,1\right]$, and let the conditions of Theorem~\ref{asdfg} (ii) be satisfied.
Further assume that either
\begin{equation}\label{fdygsdet}
\forall\Lambda\in\mathcal B_0(X)\ \exists\Lambda'\in\mathcal B_0(X)\ \forall x\in\Lambda\ \forall y\in(\Lambda')^c:\quad a(x,y)=0,\end{equation}
or
\begin{equation}\label{fdre65}
\int_\Delta k(x,x)^2\,\nu(dx)<\infty,
\end{equation}
where $\Delta$ is as in Theorem~\ref{asdfg} (ii).
 Then the
coefficient $c(x,y,\gamma)$ defined by \eqref{16}, satisfies condition \eqref{13}. Thus,
 $\mathcal{F}C_{\mathrm b}(C_{0}(X),\Gamma)\subset D(L_{\mathrm K})$,
and for each $F\in \mathcal{F}C_{\mathrm b}(C_{0}(X),\Gamma)$, $L_{\mathrm K}F$
is given by formula \eqref{2}.
\end{theorem}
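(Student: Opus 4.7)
The plan: In both parts, $L^2(\Gamma,\mu)$-membership of the functionals in \eqref{12}/\eqref{13} reduces, via Cauchy--Schwarz/H\"older and the Mecke identity \eqref{3}, to moment bounds controlled by Lemma \ref{serts}. Once those are verified, Theorem \ref{asd}(iii) gives the claimed domain inclusion $\mathcal{F}C_{\mathrm b}(C_0(X),\Gamma)\subset D(L_\sharp)$ and the explicit formulas \eqref{1}, \eqref{2}.

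Part (i): For the birth functional $B(\gamma):=\int_\Lambda r(x,\gamma)^s\chi_{\{r>0\}}\,\nu(dx)$, expand $B(\gamma)^2$ as a double $\nu$-integral, apply Fubini, and use Cauchy--Schwarz in $\mu$ to separate the two $r$-factors. For $s\in[1/2,1]$ one has $r^{2s}\le 1+r^2$, so Lemma \ref{serts} yields $\|r(x,\cdot)^s\|_{L^2(\mu)}\le 1+\sqrt{3}\,k(x,x)$, and the resulting bound $(\int_\Lambda(1+\sqrt 3\,k(x,x))\,\nu(dx))^2$ is finite by local trace-classness of $K$. For $D(\gamma):=\sum_{x\in\gamma\cap\Lambda}r(x,\gamma\setminus x)^{s-1}\chi_{\{r>0\}}$, finiteness of $|\gamma\cap\Lambda|$ yields $D(\gamma)^2\le|\gamma\cap\Lambda|\sum_{x\in\gamma\cap\Lambda}r(x,\gamma\setminus x)^{2(s-1)}\chi_{\{r>0\}}$; Mecke converts this to $\int_\Lambda(|\gamma\cap\Lambda|+1)\,r(x,\gamma)^{2s-1}\,\nu(dx)$, and Cauchy--Schwarz in $\mu$ finishes the job using $r^{2s-1}\le 1+r$ (as $s\in[1/2,1]$) together with the standing hypothesis $|\gamma\cap\Lambda|\in L^2(\Gamma,\mu)$.

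Part (ii): Write $C(\gamma)=C_1(\gamma)+C_2(\gamma)$ according to whether $\chi_\Lambda$ is applied to $x$ or to $y$. The functional $C_1=\sum_{x\in\gamma\cap\Lambda}\int_X c(x,y,\gamma\setminus x)\,\nu(dy)$ is a finite sum handled by the same three-step scheme as $D$: finite-sum Cauchy--Schwarz, Mecke on the outer sum, and H\"older in $\mu$ on $\int r(x,\cdot)^{2s-1}r(y,\cdot)^s r(y',\cdot)^s\,d\mu$, which by Lemma \ref{serts} is dominated by $(1+Ck(x,x))(1+Ck(y,y))(1+Ck(y',y'))$. The resulting $\nu$-integrals $\int a(x,y)(1+k(y,y))\,\nu(dy)$ are uniformly bounded in $x$ under either hypothesis: \eqref{fdygsdet} confines the $y$-integration to the compact set $\Lambda'$, while \eqref{fdre65} combined with boundedness of $k$ on $\Delta^c$ and $\int_\Delta k\,\nu<\infty$ gives the uniform bound after splitting the $y$-domain as $\Delta\cup\Delta^c$.

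For $C_2=\sum_{x\in\gamma}\int_\Lambda c(x,y,\gamma\setminus x)\,\nu(dy)$, under \eqref{fdygsdet} the symmetry of $a$ forces the outer sum to range only over $\gamma\cap\Lambda'$, reducing to the $C_1$-case. Under \eqref{fdre65}, split $C_2=C_2^{(\Delta)}+C_2^{(\Delta^c)}$ according to $x\in\Delta$ vs.\ $x\in\Delta^c$; the $\Delta$-part is a finite sum, treated as $C_1$. For the infinite-sum piece $C_2^{(\Delta^c)}$, apply a weighted Cauchy--Schwarz with weight $\alpha(x):=\int_\Lambda a(x,y)\,\nu(dy)$, obtaining
\begin{equation*}
\bigl(C_2^{(\Delta^c)}(\gamma)\bigr)^{2}\le\Bigl(\int_{\Delta^c}\alpha(x)\,\gamma(dx)\Bigr)\Bigl(\int_{\Delta^c}\gamma(dx)\,r(x,\gamma\setminus x)^{2(s-1)}\int_\Lambda a(x,y)\,r(y,\gamma\setminus x)^{2s}\,\nu(dy)\Bigr),
\end{equation*}
and then bound each factor in $L^2(\mu)$ via (iterated) Mecke, Lemma \ref{serts}, the uniform bound $k\le M$ on $\Delta^c$, and $\int_\Delta k^2\,\nu<\infty$. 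The main obstacle is precisely this last step: the infinite outer sum in $C_2^{(\Delta^c)}$ precludes the finite-sum Cauchy--Schwarz used everywhere else, so one must exploit that $\alpha\in L^1(X,\nu)$ (by \eqref{17} and Fubini) together with the strengthened $k^2$-integrability on $\Delta$ to control the double moments of $r$ produced when squaring the weighted bound.
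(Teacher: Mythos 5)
Your overall strategy---reducing \eqref{12}/\eqref{13} to moment bounds via Cauchy--Schwarz, the identity \eqref{3}, and Lemma \ref{serts}, and then invoking Theorem \ref{asd}(iii)---is exactly the paper's, and your treatment of part (i), of the finite-sum piece $C_1$, and of case \eqref{fdygsdet} (where the symmetry and locality of $a$ reduce everything to a finite sum over $\gamma\cap\Lambda'$) is sound.

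There is, however, a genuine gap in your last step for $C_2^{(\Delta^c)}$ under \eqref{fdre65}. Your weighted Cauchy--Schwarz produces the factor $B(\gamma):=\int_{\Delta^c}\gamma(dx)\,r(x,\gamma\setminus x)^{2(s-1)}\int_\Lambda a(x,y)\,r(y,\gamma\setminus x)^{2s}\,\nu(dy)$, which already carries the nonpositive exponent $2(s-1)$ once. If you now separate the two factors by Cauchy--Schwarz in $L^2(\mu)$ and estimate $\|B\|_{L^2(\mu)}^2$ by iterated Mecke, the diagonal term of $B^2$ is $\int_{\Delta^c}\gamma(dx)\,r(x,\gamma\setminus x)^{4(s-1)}(\cdots)^2$, which Mecke converts into $\int\nu(dx)\,r(x,\gamma)^{4s-3}(\cdots)^2$. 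For $s\in[\tfrac12,\tfrac34)$---in particular for $s=\tfrac12$, where the exponent is $-1$---this is a negative power of the Papangelou intensity on $\{r>0\}$. Lemma \ref{serts} gives no control over negative moments of $r$, and $\int_\Gamma r(x,\gamma)^{-1}\chi_{\{r>0\}}(x,\gamma)\,\mu(d\gamma)$ need not be finite: Jensen's inequality for $t\mapsto t^{-1}$ only yields the useless upper bound $\widetilde{\mathbb E}(Y(x)^{-2})=\infty$. The repair is to reverse the order of operations, as the paper does: apply Mecke to the product of the two Cauchy--Schwarz factors \emph{first}, so that the single $r(x,\gamma)$ it produces combines with $r(x,\gamma)^{2(s-1)}$ to give $r(x,\gamma)^{2s-1}$ with $2s-1\ge0$, and only then separate the two resulting functionals by Cauchy--Schwarz in $L^2(\mu)$; this leads to the estimates \eqref{aisv} and \eqref{dfgsdfh}, which involve only nonnegative powers of $r$ and are controlled by Lemma \ref{serts} together with \eqref{fdre65}. (The paper in fact does not split into $C_1+C_2$ at all: it applies one Cauchy--Schwarz with respect to the measure $u(x,y)\,\gamma(dx)\,\nu(dy)$ with $u(x,y)=a(x,y)(\chi_\Lambda(x)+\chi_\Lambda(y))$ over all of $X^2$, then Mecke, then the $L^2(\mu)$ separation.)
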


\begin{remark} If $X=\mathbb R^d$ and the function $a$ is as in Remark~\ref{tesa4wa}, then condition \eqref{fdygsdet}
   means that the function $\tilde a$ has a compact support.
   \end{remark}
\begin{proof}[Proof of Theorem \ref{sdisd}]
We again prove only the part related to Kawasaki dynamics and only in the case $l=1$, omitting the upper index (1) from our notation. We first
assume that \eqref{fdygsdet} is satisfied. Since the function $a$ is
bounded and satisfies \eqref{fdygsdet}, it suffices to show that,
for each $\Lambda\in \mathcal{B}_{0}(X)$,
\begin{equation}\label{asgasdg}
\int_{\Lambda}\gamma(dx)\int_{\Lambda}\nu(dy)r(x,\gamma\setminus
x)^{s-1}r(y,\gamma\setminus x)^{s}\chi_{\{r>0\}}(x,\gamma\setminus
x)\chi_{\{r>0\}}(y,\gamma\setminus x)\in L^{2}(\mu).
\end{equation}
We note that, for $s\in\left [\frac{1}{2},1\right]$, $2s-1\in [0,1]$.
Therefore, by the Cauchy inequality, we have
\begin{align}
&\int_{\Gamma}\mu(d\gamma)\Big(\int_{\Lambda}\gamma(dx)\,r(x,\gamma\setminus
x)^{s-1}\chi_{\{r>0\}}(x,\gamma\setminus
x)\nonumber
\\
& \quad \times \int_{\Lambda}\nu(dy)\,r(y,\gamma\setminus
x)^{s}\chi_{\{r>0\}}(y,\gamma\setminus x)\Big)^{2}\nonumber
\\
&\quad\leq\int_{\Gamma}\mu(d\gamma)\int_{\Lambda}\gamma(dx)\,r(x,\gamma\setminus
x)^{2(s-1)}\chi_{\{r>0\}}(x,\gamma\setminus
x)\notag\\
&\quad\times\Big(\int_{\Lambda}\nu(dy)\,r(y,\gamma\setminus
x)^{s}\chi_{\{r>0\}}(y,\gamma\setminus x)\Big)^{2} \gamma(\Lambda)\nonumber
\\
&\quad=\int_{\Gamma}\mu(d\gamma)\int_{\Lambda}\nu(dx)\,r(x,\gamma)^{2s-1}\chi_{\{r>0\}}(x,\gamma)\notag
\\
&\quad\times
\Big(\int_{\Lambda}\nu(dy)r(y,\gamma)^{s}\chi_{\{r>0\}}(y,\gamma)\Big)^{2}(\gamma(\Lambda)+1)\nonumber
\\
&\quad\leq
\int_{\Gamma}\mu(d\gamma)\Big(\int_{\Lambda}\nu(dx)(1+r(x,\gamma))\Big)^{3}(\gamma(\Lambda)+1)\nonumber
\\
&\quad\leq\Big(\int_{\Gamma}\mu(d\gamma)\Big(\int_{\Lambda}\nu(dx)(1+r(x,\gamma))\Big)^{6}\Big)^{1/2}
\Big(\int_{\Gamma}\mu(d\gamma)(\gamma(\Lambda)+1)^{2}\Big)^{1/2}.\label{ghsghsdf}
\end{align}

By  Lemma \ref{serts}, we have, for each $n\in\mathbb{N}$,
\begin{equation}\label{hjkhjfd}
\begin{aligned}
&\int_{\Gamma}\mu(d\gamma)\Big(\int_{\Lambda}\nu(dx)\,r(x,\gamma)\Big)^{n}
\\
&\quad=\int_{\Lambda}\nu(dx_{1})\dotsm\int_{\Lambda}\nu(dx_{n})\int_{\Gamma}\mu(d\gamma)\,
r(x_1,\gamma)\dotsm
r(x_{n},\gamma)\
\\
&\quad\leq\int_{\Lambda}\nu(dx_{1})\dotsm\int_{\Lambda}\nu(dx_{n})\|r(x_{1},\cdot)\|_{L^{n}(\mu)}\dotsm\|r(x_{n},\cdot)
\|_{L^{n}(\mu)}
\\
&\quad\leq\frac{(2n)!}{2^{n}n!}\Big(\int_{\Lambda}\nu(dx)k(x,x)\Big)^{n}<\infty
\end{aligned}
\end{equation}
Now, \eqref{asgasdg} follows from \eqref{ghsghsdf} and \eqref{hjkhjfd}.

Next, we assume that \eqref{fdre65} is satisfied. We fix
$\Lambda\in\mathcal{B}_{0}(X)$ and denote
$$u(x,y):=a(x,y)(\chi_{\Lambda}(x)+\chi_{\Lambda}(x)).$$
Then, by the Cauchy inequality,
\begin{align*}
&\int_{\Gamma}\mu(d\gamma)\Big(\int_{X}\gamma(dx)\int_{X}\nu(dy)\,u(x,y)r(x,\gamma\setminus
x)^{s-1}\chi_{\{r>0\}}(x,\gamma\setminus x)
\\
&\quad
\times r(y,\gamma\setminus
x)^{s}\chi_{\{r>0\}}(y,\gamma\setminus x)\Big)^{2}
\\
&\quad\leq
\int_{\Gamma}\mu(d\gamma)\int_{X}\gamma(dx)\int_{X}\nu(dy)\,u(x,y)r(x,\gamma\setminus
x)^{2(s-1)}\chi_{\{r>0\}}(x,\gamma\setminus x)
\\
&\quad\times r(y,\gamma\setminus
x)^{2s} \chi_{\{r>0\}}(y,\gamma\setminus
x)\int_{X}\gamma(dx')\int_{X}\nu(dy')\,u(x',y')
\\
&\quad =\int_{\Gamma}\mu(d\gamma)\int_{X}\nu(dx)\int_{X}\nu(dy)\,
u(x,y)r(x,\gamma)^{2s-1}\chi_{\{r>0\}}(x,\gamma)
\\
&\quad\times r(y,\gamma)^{2s}\chi_{\{r>0\}}
(y,\gamma)\int_{X}(\gamma+\varepsilon_{x})(dx')\int_{X}\nu(dy')\,u(x',y')
\\
&\quad\leq
\int_{\Gamma}\mu(d\gamma)\int_{X}\nu(dx)\int_{X}\nu(dy)\,u(x,y)(1+r(x,\gamma))(1+r(y,\gamma)^{2}
\\
&\quad\times\Big(\int_{X}\gamma(dx')\int_{X}\nu(dy')u(x',y')+\int_{X}\nu(dy')u(x,y')\Big).
\end{align*}
By \eqref{17},  it suffices to prove
that
\begin{gather}\label{aisv}
\int_{\Gamma}\mu(d\gamma)\Big(\int_{X}\nu(dx)\int_{X}\nu(dy)\,u(x,y)(1+r(x,\gamma))
(1+r(y,\gamma)^{2})\Big)^{2}<\infty,\\
\label{dfgsdfh}
\int_{\Gamma}\mu(d\gamma)\Big(\int_{X}\gamma(dx)\int_{X}\nu(dy)\,u(x,y)\Big)^{2}<\infty.
\end{gather}
We first to prove \eqref{dfgsdfh}. We have, by Proposition \ref{as},
\begin{align*}
&\int_{\Gamma}\Big(\int_{X}\gamma(dx)\int_{X}\nu(dy)u(x,y)\Big)^{2}
\\
&\quad =\int_{X}\nu(dy)\int_{X}\nu(dy')\int_{\Gamma}\mu(d\gamma)\int_{X}\gamma(dx)\int_{X}\gamma(dx')
\,u(x,y)u(x',y')
\\
&\quad=\int_{X}\nu(dy)\int_{X}\nu(dy')\int_{\Gamma}\mu(d\gamma)\Big(\int_{X}\gamma(dx)\,u(x,y)u(x,y')
\\
&\quad
+\int_{X}\gamma(dx)\int_{X}(\gamma-\varepsilon_{x})(dx')\,u(x,y)u(x',y')\Big)
\\
&\quad=\int_{X}\nu(dy)\int_{X}\nu(dy')\Big(\int_{X}\nu(dx)\,k(x,x)u(x,y)u(x,y')
\\
& \quad
+\int_{X}\nu(dx)\int_{X}\nu(dx')\Big(\frac12\,k(x,x')^{2}+k(x,x)k(x',x')\Big)u(x,y)u(x',y')\Big)
\\
&\quad\leq\int_{X}\nu(dy)\int_{X}\nu(dy')\Big(\int_{X}\nu(dx)\,k(x,x)u(x,y)u(x,y')
\\
&\quad
+\int_{X}\nu(dx)\int_{X}\nu(dx')\,\frac32\,k(x,x)k(x',x')u(x,y)u(x',y')\Big)
\\
&\quad=\int_{X}\nu(dy)\int_{X}\nu(dy')\int_{X}\nu(dx)\,k(x,x)u(x,y)u(x,y')
\\
&\quad
+\frac32\Big(\int_{X}\nu(dy)\int_{X}\nu(dx)\,k(x,x)u(x,y)\Big)^{2}
\\
&\quad\leq\int_{\Delta}\nu(dx)\,k(x,x)\Big(\int_{X}\nu(dy)\,u(x,y)\Big)^{2}
\\
&\quad\text{}+C_{7}\int_{X}\nu(dy)\int_{X}\nu(dy')\int_{X}\nu(dx)\,u(x,y)u(x,y')
\\
&\quad\text{}+\frac32\Big(\int_{\Delta}\nu(dx)\,k(x,x)\int_{X}\nu(dy)\,u(x,y)+C_{7}\int_{X}\nu(dy)
\int_{X}\nu(dx)\,u(x,y)
\Big)^{2}<\infty.
\end{align*}
Next, we prove \eqref{aisv}. By Lemma \ref{serts} and \eqref{fdre65},
we have
\begin{align*}
&\int_{\Gamma}\mu(d\gamma)\Big(\int_{X}\nu(dx)\int_{X}\nu(dy)\,u(x,y)(1+r(x,\gamma))(1+r(y,\gamma)^{2})
\Big)^{2}
\\
&\quad=\int_{X}\nu(dx)\int_{X}\nu(dx')\int_{X}\nu(dy)\int_{X}\nu(dy')\,u(x,y)u(x',y')
\\
&\quad\times\int_{\Gamma}\mu(d\gamma)(1+r(x,\gamma))(1+r(x',\gamma))(1+r(y,\gamma)^{2})(1+r(y',\gamma)^{2})
\\
&\quad\leq\int_{X}\nu(dx)\int_{X}\nu(dx')\int_{X}\nu(dy)\int_{X}\nu(dy')\,u(x,y)u(x',y')\left(1+\|r(x,\cdot)
\|_{L^{4}(\mu)}\right)
\\
&\quad\times\left(1+\|r(x',\cdot)\|_{L^{4}(\mu)}\right)\left(1+\|r(y,\cdot)^2\|_{L^{4}(\mu)}\right)
\left(1+\|r(y',\cdot)^2\|_{L^{4}(\mu)}\right)\\
&\quad\leq
C_{8}\Big(\int_{X}\nu(dx)\int_{X}\nu(dy)\,u(x,y)(1+k(x,x))(1+k(y,y)^{2})\Big)^{2}<\infty.
\end{align*}
Thus, the theorem is proven.\end{proof}

\section{Diffusion approximation}

From now on, we set $X=\mathbb R^d$, $d\in\mathbb N$, and $\nu$ to be Lebesgue measure. We will show that, under an appropriate scaling, the Dirichlet form of the Kawasaki dynamics converges to a Dirichlet form which identifies a diffusion process on $\Gamma$ having a permanental point process $\mu^{(l)}$ as a symmetrizing measure. The way we  scale the Kawasaki dynamics will be similar to the ansatz of \cite{KKL}.

We denote by
$\mathcal{F}C^\infty_{\mathrm b}(C^\infty_{0}(\mathbb R^d),\Gamma)$ the space of all
functions of the form \eqref{hdtrse} where $N\in \mathbb{N}$, $\varphi_{1},\dots, \varphi_{N}\in C^\infty_{0}(\mathbb R^d)$
and $g\in C^\infty_{\mathrm b}(\mathbb{R}^{N})$.
Here, $C^\infty_{0}(\mathbb R^d)$ denotes the space of smooth functions on $\mathbb R^d$ with compact support, and $C^\infty_{\mathrm b}(\mathbb{R}^{N})$ denotes the space of all smooth
bounded functions on $\mathbb{R}^{N}$ whose all derivatives are bounded. Clearly,
$$\mathcal{F}C^\infty_{\mathrm b}(C^\infty_{0}(\mathbb R^d),\Gamma)\subset \mathcal{F}C_{\mathrm b}(C_{0}(\mathbb R^d),\Gamma),$$
and  the set $\mathcal{F}C^\infty_{\mathrm b}(C^\infty_{0}(\mathbb R^d),\Gamma)$ is a core for the Dirichlet form $(\mathcal E_{\mathrm K}, D(\mathcal E_{\mathrm K}))$.

We fix $s=1/2$. Let us assume that the function $a(x,y)$
is as in Remark \ref{tesa4wa}. Thus, the coefficient $c(x,y,\gamma)$ has the form
\begin{equation}\label{sersa} c(x,y,\gamma)=a(x-y)r(x,\gamma)^{-1/2}r(y,\gamma)^{1/2}\chi_{\{r>0\}}(x,\gamma)\chi_{\{r>0\}}(y,\gamma)
.\end{equation}
Note that $y-x$ describes the change of the position of a particle which hops from $x$ to $y$. We now scale the function $a$ as follows: for each $\varepsilon>0$, we denote
\begin{equation}\label{drh}a_\varepsilon(x):=\varepsilon^{-d-2}a(x/\varepsilon),\quad x\in\mathbb R^d.\end{equation}
The Dirichlet form $(\mathcal E_{\mathrm K}, D(\mathcal E_{\mathrm K}))$ which corresponds to
the choice of function $a$ as in \eqref{drh} will be denoted by $(\mathcal E_\eps,D(\mathcal E_\eps))$.

\begin{theorem}\label{trsres} Assume that the function $a$ has  compact support, and the value $a(x)$ only depends on $|x|$, i.e., $a(x)=\tilde a(|x|)$ for some function $\tilde a:[0,\infty)\to\mathbb R$. Further assume that the function $\varkappa(x,y)$ has the form $\varkappa(x-y)$ for some $\varkappa:\mathbb R^d\to\mathbb C$, and
\begin{equation}\label{tsrs5r}
\lim_{y\to0}\int_{\mathbb R^d} (\varkappa(x)-\varkappa(x+y))^2\,dx=0.
\end{equation}
For each $l\in\mathbb N$, define a bilinear form $(\mathcal E_0,\mathcal{F}C^\infty_{\mathrm b}(C^\infty_{0}(\mathbb R^d),\Gamma))$ by
\begin{equation}\label{sawawe} \mathcal E_0(F,G):=c\int_\Gamma \mu^{(l)}(d\gamma)\int_{\mathbb R^d}dx\, r(x,\gamma)
\la \nabla_x F(\gamma\cup x),\nabla_x G(\gamma\cup x)\ra.\end{equation}
Here
$$c:=\frac{1}{2}\int_{\mathbb R^d}a(x)x_1^2\,dx$$
($x_1$ denoting the first coordinate of $x\in\mathbb R^d$), $\nabla_x$ denotes the gradient in the $x$ variable, and $\la\cdot,\cdot\ra$ stands for the scalar product in $\mathbb R^d$.  Then, for any $F,G\in \mathcal{F}C^\infty_{\mathrm b}(C^\infty_{0}(\mathbb R^d),\Gamma)$,
$$\mathcal E_\eps(F,G)\to\mathcal E_0(F,G)\quad \text{as }\eps\to0.$$
\end{theorem}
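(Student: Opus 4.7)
The plan is: (i) apply the Papangelou identity \eqref{3} to the outer $\gamma(dx)$ integration so as to eliminate the singular factor $r(x,\gamma)^{-1/2}$ in $c_\eps$; (ii) change variable $y=x+\eps z$ to convert $a_\eps(x-y)\,dy$ into $\eps^{-2}a(z)\,dz$; (iii) Taylor-expand the increments of $F$ and $G$ to cancel the $\eps^{-2}$; (iv) pass to the limit by dominated convergence; and (v) use the rotational symmetry of $a$ to identify the limit with $\mathcal E_0$. For simplicity of notation I would work with $l=1$; the general case is analogous.

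Steps (i)--(iii) are mechanical. Using \eqref{3} together with the convention $0/0=0$, the product $r(x,\gamma)\cdot r(x,\gamma)^{-1/2}\chi_{\{r>0\}}(x,\gamma)$ equals $r(x,\gamma)^{1/2}$, giving
\begin{align*}
\mathcal E_\eps(F,G)&=\tfrac{1}{2}\int_\Gamma\mu^{(l)}(d\gamma)\int_{\mathbb R^d}dx\int_{\mathbb R^d}dy\,a_\eps(x-y)\,r(x,\gamma)^{1/2}r(y,\gamma)^{1/2}\\
&\quad\times(F(\gamma\cup y)-F(\gamma\cup x))(G(\gamma\cup y)-G(\gamma\cup x)).
\end{align*}
The substitution $y=x+\eps z$, the evenness of $a$, and the identity $F(\gamma\cup(x+\eps z))-F(\gamma\cup x)=\eps\int_0^1\la z,\nabla_xF(\gamma\cup(x+t\eps z))\ra\,dt$ (and analogously for $G$) produce
\begin{align*}
\mathcal E_\eps(F,G)&=\tfrac{1}{2}\int_\Gamma\mu^{(l)}(d\gamma)\int_{\mathbb R^d}dx\int_{\mathbb R^d}dz\,a(z)\,r(x,\gamma)^{1/2}r(x+\eps z,\gamma)^{1/2}\\
&\quad\times\int_0^1\!\!\int_0^1\la z,\nabla_xF(\gamma\cup(x+t\eps z))\ra\la z,\nabla_xG(\gamma\cup(x+s\eps z))\ra\,dt\,ds.
\end{align*}

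The analytic heart is step (iv). For each fixed $(x,z)$, smoothness of $F,G$ yields uniform-in-$\gamma$ convergence of the gradient factors to $\la z,\nabla_xF(\gamma\cup x)\ra\la z,\nabla_xG(\gamma\cup x)\ra$. For the $r$-factor, I would prove $r(x,\cdot)^{1/2}r(x+\eps z,\cdot)^{1/2}\to r(x,\cdot)$ in $L^1(\mu^{(l)})$ as follows: Cauchy--Schwarz and the inequality $(\sqrt u-\sqrt v)^2\le|u-v|$ reduce the problem to controlling $\int_\Gamma|r(x+\eps z,\gamma)-r(x,\gamma)|\,\mu^{(l)}(d\gamma)$; Jensen's inequality applied to the conditional expectation \eqref{46} and a further Cauchy--Schwarz bound this by $\bigl(\widetilde{\mathbb E}(Y(x+\eps z)-Y(x))^2\bigr)^{1/2}\bigl(\widetilde{\mathbb E}(Y(x+\eps z)+Y(x))^2\bigr)^{1/2}$, which in the translation-invariant setting equals $(2K(0)-2K(\eps z))^{1/2}(2K(0)+2K(\eps z))^{1/2}$ with $K(w):=\int\varkappa(y)\varkappa(y+w)\,dy$. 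Hypothesis \eqref{tsrs5r} together with Cauchy--Schwarz forces $K(\eps z)\to K(0)$ as $\eps\to 0$ uniformly for $z\in\operatorname{supp}a$, so this upper bound tends to $0$ uniformly in $x$ and in $z\in\operatorname{supp}a$. To justify dominated convergence under the $dx\,dz$-integration, I use that $|z|^2a(z)$ is compactly supported and integrable; that the gradient factors are uniformly bounded and supported in a fixed compact $K'\subset\mathbb R^d$ for $\eps\le 1$ and $z\in\operatorname{supp}a$; and that, by Cauchy--Schwarz and Lemma~\ref{serts}, $\int_\Gamma r(x,\gamma)^{1/2}r(x+\eps z,\gamma)^{1/2}\,\mu^{(l)}(d\gamma)\le K(0)$ uniformly.

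The limit integrand is $a(z)r(x,\gamma)\la z,\nabla_xF(\gamma\cup x)\ra\la z,\nabla_xG(\gamma\cup x)\ra$. The isotropy $a(x)=\tilde a(|x|)$ gives $\int_{\mathbb R^d}a(z)z_iz_j\,dz=\delta_{ij}\int_{\mathbb R^d}a(z)z_1^2\,dz=2c\delta_{ij}$, so $\int_{\mathbb R^d}a(z)\la z,A\ra\la z,B\ra\,dz=2c\la A,B\ra$ for any $A,B\in\mathbb R^d$. Substituting $A=\nabla_xF(\gamma\cup x)$, $B=\nabla_xG(\gamma\cup x)$ yields exactly $\mathcal E_0(F,G)$ from \eqref{sawawe}. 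The principal technical obstacle is the uniform (in $x$ and $z\in\operatorname{supp}a$) $L^1(\mu^{(l)})$-continuity of $r(\cdot,\gamma)^{1/2}$ in its spatial argument; hypothesis \eqref{tsrs5r} is tailored precisely to deliver it.
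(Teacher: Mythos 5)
Your proof is correct, and its skeleton coincides with the paper's: apply the Papangelou identity \eqref{3} to turn $\int\gamma(dx)$ into $\int r(x,\gamma)\,\nu(dx)$ and cancel the $r^{-1/2}$ singularity, substitute $y=x+\eps z$, expand the increments of $F$ and $G$, pass to the limit by dominated convergence (using locality of $F,G$ and compact support of $a$ to confine $x$ to a fixed compact set), and finally use the isotropy $a(x)=\tilde a(|x|)$ to identify the constant $c$. The genuine difference lies in how the analytic core --- the convergence of the $r$-factors --- is handled. The paper isolates a lemma stating that $r(x+\eps y,\gamma)^{\alpha}\to r(x,\gamma)^{\alpha}$ in $L^2(\mu(d\gamma)\,dx\,a(y)\,dy)$ for every $\alpha\in(0,1]$: the case $\alpha=1$ is done by expanding fourth Gaussian moments of $Y$, and the case of general $\alpha$ is deduced from an abstract Vitali-type claim (convergence in measure plus uniform integrability survives the map $f\mapsto f^{\alpha}$); the quadratic form is then treated with $F=G$, the two $L^2$-convergent factors multiplied, and the bilinear statement recovered by polarization. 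You instead prove directly that $r(x,\cdot)^{1/2}r(x+\eps z,\cdot)^{1/2}\to r(x,\cdot)$ in $L^1(\mu^{(l)})$, uniformly in $x$ and in $z\in\operatorname{supp}a$, using only the elementary inequality $(\sqrt u-\sqrt v)^2\le|u-v|$, Jensen's inequality for the conditional expectation \eqref{46}, Cauchy--Schwarz, and second moments of $Y$, reducing everything to the single quantity $2K(0)-2K(\eps z)=\int(\varkappa(u)-\varkappa(u+\eps z))^2\,du$, which is exactly what hypothesis \eqref{tsrs5r} controls. Your route is more elementary (no fourth-moment computation, no uniform-integrability machinery) and yields a quantitative bound that is uniform in $x$ by translation invariance; the paper's lemma is more general --- it covers all powers $\alpha\in(0,1]$, which would be needed for $s\ne\frac12$ --- but is stronger than necessary for the $s=\frac12$ case to which Section~4 is restricted. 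Treating $F\ne G$ directly via the double Duhamel representation rather than by polarization is a cosmetic difference.
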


\begin{remark}\label{gdctsjh}
Assume that the function $\varkappa$ is differentiable on $\mathbb R^d$.
Denote
$$K(x,\delta):=\sup_{y\in B(x,\delta)}|\nabla \varkappa(y)|, \quad x\in\mathbb R^d,\quad \delta>0.$$
Here $B(x,\delta)$ denotes the closed ball in $\mathbb R^d$ centered at $x$ and of radius $\delta$. Assume that, for some $\delta>0$,
\begin{equation}\label{sraserw}
K(\cdot,\delta)\in L^2(\mathbb R^d,dx).
\end{equation}
Then condition \eqref{tsrs5r} is clearly satisfied.
Note that condition \eqref{sraserw} is slightly stronger than the condition $|\nabla\varkappa|\in L^2(\mathbb R^d,dx)$.
\end{remark}
\begin{proof}[Proof of Theorem \ref{trsres}] Again we will only present the proof in the case $l=1$, omitting the upper index $(1)$. We start with the following

\begin{lem}\label{ufytcfdf} Fix any $\Lambda\in\mathcal B_0(\mathbb R^d)$ and $\alpha\in(0,1]$.
Then, under the conditions of Theorem~\ref{trsres},
$$ r(x+\eps y,\gamma)^\alpha\to r(x,\gamma)^\alpha \quad \text{in}\quad  L^2(\Gamma\times\Lambda\times\mathbb R^d,\mu(d\gamma)
\,dx\,dy\, a(y))\quad \text{as}\quad \eps\to0.$$
\end{lem}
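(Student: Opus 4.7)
The plan is to reduce the claim, via Jensen's inequality on the conditional expectation defining $r$, to $L^2(\widetilde{\mathbb P})$-continuity of the squared Gaussian field $Y(\cdot)^2$, and then use a concavity bound to pass from $\alpha=1$ to general $\alpha\in(0,1]$.

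First, I would exploit the translation invariance hypothesis $\varkappa(x,y)=\varkappa(x-y)$ to observe that the covariance kernel is also translation invariant: $k(x,y)=\tilde k(y-x)$ with $\tilde k(h)=\int_{\mathbb R^d}\varkappa(u)\varkappa(u+h)\,du$. A direct change of variables then gives
$$\mathbb E\bigl[(Y(x+\eps y)-Y(x))^2\bigr]=\int_{\mathbb R^d}(\varkappa(u+\eps y)-\varkappa(u))^2\,du,$$
which is \emph{independent of $x$} and tends to $0$ as $\eps\to 0$ for each fixed $y$ by hypothesis \eqref{tsrs5r}. Since $Y(x+\eps y)-Y(x)$ is centred Gaussian, its fourth moment is three times the square of its variance. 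Combining this with Cauchy--Schwarz applied to the factorisation $Y(x+\eps y)^2-Y(x)^2=(Y(x+\eps y)-Y(x))(Y(x+\eps y)+Y(x))$, and using that $\|Y(x+\eps y)+Y(x)\|_{L^4(\widetilde{\mathbb P})}$ is uniformly bounded by a constant depending only on $\|\varkappa\|_{L^2(\mathbb R^d)}$, yields
$$\widetilde{\mathbb E}\bigl[(Y(x+\eps y)^2-Y(x)^2)^2\bigr]\longrightarrow 0\qquad(\eps\to0),$$
with a bound uniform in $\eps,x,y$.

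Next, Jensen's inequality for conditional expectation gives
$$\int_\Gamma|r(x+\eps y,\gamma)-r(x,\gamma)|^2\,\mu(d\gamma)\le\widetilde{\mathbb E}\bigl[(Y(x+\eps y)^2-Y(x)^2)^2\bigr],$$
so the previous step provides pointwise-in-$(x,y)$ convergence of the left-hand side to $0$, together with a uniform bound. Since $\Lambda$ has finite Lebesgue measure and $a$ has compact support (so $a\in L^1(\mathbb R^d,dx)$), the product measure $dx\,a(y)\,dy$ on $\Lambda\times\mathbb R^d$ is finite, and dominated convergence settles the case $\alpha=1$.

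Finally, to handle general $\alpha\in(0,1]$, I would use the elementary inequality $|a^\alpha-b^\alpha|\le|a-b|^\alpha$ for $a,b\ge 0$, followed by Jensen's inequality for the probability measure $\mu$ applied to the concave map $t\mapsto t^\alpha$, to obtain
$$\int_\Gamma|r(x+\eps y,\gamma)^\alpha-r(x,\gamma)^\alpha|^2\,\mu(d\gamma)\le\bigg(\int_\Gamma|r(x+\eps y,\gamma)-r(x,\gamma)|^2\,\mu(d\gamma)\bigg)^{\alpha},$$
which reduces the general case to the $\alpha=1$ case already proved, once again via dominated convergence in $(x,y)$. The only real point of care — rather than a genuine obstacle — is maintaining uniform $L^\infty$ bounds in $\eps,x,y$ throughout, so that dominated convergence applies; the remainder is a straightforward Gaussian moment computation combined with the translation invariance of $\varkappa$.
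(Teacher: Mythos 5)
Your proof is correct, and its skeleton matches the paper's: reduce to the squared Gaussian field via Jensen's inequality for conditional expectation, settle $\alpha=1$ by Gaussian moment estimates, then bootstrap to general $\alpha$. The execution differs in two places, both in the direction of greater economy. For $\alpha=1$, the paper expands $(Y(x+\varepsilon y)^2-Y(x)^2)^2$ and computes the three fourth-order Gaussian moments explicitly, using translation invariance to see that the two pure fourth moments coincide and that the cross term converges to them by \eqref{tsrs5r} and dominated convergence; you instead factor $Y(x+\varepsilon y)^2-Y(x)^2=(Y(x+\varepsilon y)-Y(x))(Y(x+\varepsilon y)+Y(x))$ and apply Cauchy--Schwarz, exploiting that the first factor is centred Gaussian with variance $\int(\varkappa(u+\varepsilon y)-\varkappa(u))^2\,du\to0$, which makes the pointwise-in-$(x,y)$ convergence and the uniform bound explicit before dominated convergence on the finite measure $dx\,a(y)\,dy$. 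For $\alpha\in(0,1)$, the paper proves a general claim --- that $L^2(m)$ convergence of nonnegative functions implies $L^2(m)$ convergence of their $\alpha$-th powers --- via the Vitali convergence theorem (convergence in measure plus uniform integrability, citing Bauer); your route through the subadditivity inequality $|a^\alpha-b^\alpha|\le|a-b|^\alpha$ followed by Jensen for the concave map $t\mapsto t^\alpha$ yields the one-line pointwise estimate
$\int_\Gamma|r(x+\varepsilon y,\gamma)^\alpha-r(x,\gamma)^\alpha|^2\,\mu(d\gamma)\le\bigl(\int_\Gamma|r(x+\varepsilon y,\gamma)-r(x,\gamma)|^2\,\mu(d\gamma)\bigr)^\alpha$,
reducing the general case directly to the $\alpha=1$ bound already in hand. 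Both arguments are valid; yours trades the paper's soft functional-analytic lemma for sharper elementary inequalities and keeps the whole proof self-contained.
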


\begin{proof} We first prove the statement for $\alpha=1$. Thus, equivalently we have to prove that
 \begin{equation}\label{ctdstrmbgy}
 r(x+\eps y,\gamma)\to r(x,\gamma)\quad  \text{in} \quad L^2(\Omega\times\Gamma
 \times\Lambda\times\mathbb R^d,\tilde{\mathbb P}(d\omega,d\gamma)\,dx\,dy\, a(y))\quad \text{as}\quad \eps\to0.
 \end{equation}
We have, using Jensen's inequality for conditional expectation,
\begin{equation}\label{xsersaw}
\begin{aligned}
&\int_\Lambda dx\int_{\mathbb R^d}dy\,a(y)\int_{\Omega\times\Gamma}\tilde{\mathbb P}(d\omega,d\gamma)
\,(r(x+\eps y)-r(x,\gamma))^2
\\
&\quad =\int_\Lambda dx\int_{\mathbb R^d}dy\,a(y)\int_{\Omega\times\Gamma}\tilde{\mathbb P}(d\omega,d\gamma)
 \tilde{\mathbb E}(Y(x+\eps y)^2-Y(x)^2\mid\mathcal F)^2
 \\
&\quad \le\int_\Lambda dx\int_{\mathbb R^d}dy\,a(y)\int_{\Omega\times\Gamma}\tilde{\mathbb P}(d\omega,d\gamma)
 (Y(x+\eps y)^2-Y(x)^2)^2
 \\
&\quad =\int_\Lambda dx\int_{\mathbb R^d}dy\,a(y)\int_{\Omega}d{\mathbb P}\, (Y(x+\eps y)^4+Y(x)^4
-2Y(x+\eps y)^2Y(x)^2).
\end{aligned}
\end{equation}
Using the formula for moments of a Gaussian measure, we have
\begin{equation}\label{dserazaz}
\begin{aligned}
&\int_{\Omega} Y(x+\eps y)^4\,d{\mathbb P}
\\
&\quad=
3\Big(\int_{\mathbb R^d}
\varkappa(x+\eps y-u)^2\,du\Big)^2
\\
&\quad=3\Big(\int_{\mathbb R^d}
\varkappa(x-u)^2\,du\Big)^2
\\
&\quad=\int_{\Omega} Y(x)^4\,d{\mathbb P}.
\end{aligned}
\end{equation}
Analogously, using condition \eqref{tsrs5r} and the dominated convergence theorem, we get
\begin{equation}\label{drtdsrfts}
\begin{aligned}
&\int_\Lambda dx\int_{\mathbb R^d}dy\,a(y)\int_{\Omega}d{\mathbb P}\,Y(x+\eps y)^2Y(x)^2
\\
&\quad=\int_\Lambda dx\int_{\mathbb R^d}dy\,a(y)\bigg[
\int_{\mathbb R^d}\varkappa(x+\eps y-u)^2 \,du\cdot \int_{\mathbb R^d} \varkappa(x-u')^2\,du'
\\
&\quad\text{}+2\Big(\int_{\mathbb R^d}  \varkappa(x+\eps y-u)\varkappa(x-u)\,du\Big)^2\bigg]
\\
&\quad\to\int_\Lambda dx\int_{\mathbb R^d}dy\,a(y)\int_{\Omega}d{\mathbb P}\, Y(x)^4\quad \text{as}\quad \eps\to0.
\end{aligned}
\end{equation}
By \eqref{xsersaw}--\eqref{drtdsrfts}, statement \eqref{ctdstrmbgy} follows.

To prove the result for  $\alpha\in(0,1)$, it is now sufficient to show the following

{\it Claim.} Let $(\mathbf A,\mathcal A,m)$ be a measure space and let $m(A)<\infty$. Let
$f_\eps\in L^2(m)$,  $f_\eps\ge0$, $\eps\in[-1,1]$,  and let $f_\eps\to f_0$ in $L^2(m)$ as $\eps\to0$. Then, for each $\alpha\in(0,1)$, $f_\eps^\alpha\to f_0^\alpha$ in $L^2(m)$ as $\eps\to0$.

By e.g.\ \cite[Theorems 21.2 and 21.4]{Bauer}, $f_\eps\to f_0$ in $L^2(m)$  implies that
\begin{itemize}
\item[(i)] $f_\eps\to f_0$ in measure;
\item[(ii)] $\displaystyle\sup_{\eps\in[-1,1]} \int f_\eps^2\,dm<\infty$;
\item[(iii)] For each $\theta>0$ there exist $h\in L^1(m)$ and $\delta>0$ such that, for all $0<|\eps|\le1$ and
for each $A\in\mathcal A$
$$ \int_ A h\,dm\le \delta \Rightarrow \int _Af_\eps^2\,dm\le \theta.$$
\end{itemize}

Hence, for $\alpha\in(0,1)$, we get
\begin{itemize}
\item[a)] $f_\eps^\alpha\to f_0^\alpha$ in measure;
\item[b)] $\displaystyle\sup_{\eps\in[-1,1]} \int f_\eps^{2\alpha}\,dm\le \sup_{\eps\in[-1,1]} \int (1+f_\eps^2)\,dm<\infty;$
    \item[c)] Let $\theta$, $h$, and $\delta$ be as in (iii). Set $h':=h+\frac{\delta}{\theta}$. Clearly, $h\in L^1(m)$.
    Assume that, for some $A\in\mathcal A$, $\int_A h'\,dm\le\delta$. Hence
$\int_A h\,dm\le\delta$, and therefore $\int_A f_\eps^2\,dm\le\delta$ for all $0<|\eps|\le1$. Furthermore, we get
$\int_A\frac{\delta}{\theta}\,dm\le\delta$, and therefore $m(A)\le \theta$. Now
$$\int_A f_\eps^{2\alpha}\,dm\le \int_A(1+f_\eps^2)\,dm\le2\theta.$$
\end{itemize}
Applying again \cite[Theorems 21.2 and 21.4]{Bauer}, we conclude the claim.
\end{proof}

Fix any $F \in\mathcal{F}C^\infty_{\mathrm b}(C^\infty_{0}(\mathbb R^d),\Gamma)$. We have
\begin{align*}
&\mathcal E_\eps(F,F)\\
& =\frac{1}{2}\int_\Gamma\mu(d\gamma)\int_{\mathbb R^d}dx\int_{\mathbb R^d}dy\,\eps^{-d-2}
a((x-y)/\eps)r(x,\gamma)^{1/2} r(y,\gamma)^{1/2}(F(\gamma\cup x)-F(\gamma\cup y))^2\\
&=\frac{1}{2}\int_\Gamma\mu(d\gamma)\int_{\mathbb R^d}dx\int_{\mathbb R^d}dy\, a(y)r(x+\eps y,\gamma)^{1/2}
r(x,\gamma)^{1/2}\left(\frac{F(\gamma\cup\{x+\eps y\})-F(\gamma\cup x)}{\eps}\right)^2.
\end{align*}
Assume that $0<|\eps|\le1.$ Noting that the function $F$ is local
(i.e., there exists $\Delta\in\mathcal B_0(\mathbb R^d)$ such that $F(\gamma)=F(\gamma_\Delta)$ for all $\gamma\in \Gamma$) and that the function $a$ has a compact support, we conclude that there exists $\Lambda\in\mathcal B_0(\mathbb R^d)$ such that

\begin{equation}\label{gsaw4ty}
\begin{aligned}
\mathcal E_\eps(F,F)
& =\int_\Gamma\mu(d\gamma)\int_{\Lambda}dx\int_{\mathbb R^d}dy\, a(y)r(x+\eps y,\gamma)^{1/2}
r(x,\gamma)^{1/2}
\\
& \times \left(\frac{F(\gamma\cup\{x+\eps y\})-F(\gamma\cup x)}{\eps}\right)^2.
\end{aligned}
\end{equation}

By the dominated convergence theorem
 \begin{equation}\label{dse5ry}r(x,\gamma)^{1/2}\left(\frac{F(\gamma\cup\{x+\eps y\})-F(\gamma\cup x)}{\eps}\right)^2\to r(x,\gamma)^{1/2}\langle \nabla_x F(\gamma\cup x),y\ra^2 \end{equation} in
$L^2(\Gamma\times\Lambda\times\mathbb R^d,\mu(d\gamma)\,dx\,dy\, a(y))$ as $\eps\to0$.
By Lemma \ref{ufytcfdf} with $\alpha=1/2$, \eqref{gsaw4ty} and \eqref{dse5ry}
 \begin{equation}\label{hdstred}\mathcal E_\eps(F,F)\to \frac{1}{2}\int_\Gamma\mu(d\gamma)\int_{\Lambda}dx\int_{\mathbb R^d}dy\, a(y) r(x,\gamma)\langle \nabla_x F(\gamma\cup x),y\ra^2. \end{equation}
Since $a(y)=\tilde a(|y|)$, for any $i,j\in\{1,\dots,d\}$, $i\ne j$, we have
$$\int_{\mathbb R^d}a(y)y_iy_j\,dy=0$$
and $$c=\frac{1}{2}\int_{\mathbb R^d}a(y)y_i^2\,dy,\quad i=1,\dots,d.$$
Therefore, by \eqref{hdstred},
$$ \mathcal E_\eps(F,F)\to c\int_\Gamma\mu(d\gamma)\int_{\mathbb R^d}dx\, r(x,\gamma)|\nabla_xF(\gamma\cup x)|^2.$$
From here the theorem follows by the polarization identity.
\end{proof}

We will now show that the limiting form $(\mathcal E_0,\mathcal{F}C^\infty_{\mathrm b}(C^\infty_{0}(\mathbb R^d),
\Gamma))$ is closable and its closure identifies a diffusion process.

In what follows, we will assume that the conditions of Theorem~\ref{trsres} are satisfied.
We have
\begin{align*}
k(x,y)&=\int_{\mathbb R^d}\varkappa(x-u)\varkappa(y-u)\,du
\\
&=\int_{\mathbb R^d}\varkappa(u-y)\varkappa(u-x)\,du
=\int_{\mathbb R^d}\varkappa(u)\varkappa(u+y-x)\,du.
\end{align*}
Hence, by \eqref{tsrs5r}, the function $k(x,y)$ is  continuous  on $(\mathbb R^d)^2$. Thus, by Remark~\ref{cfdxrbv},
$(Y(x))_{x\in X}$ is a Gaussian random field and formula \eqref{36}
holds for all $(x,y)\in (\mathbb R^d)^2$.

Consider the semimetric
\begin{equation}\label{gdtrdjjjv}
\begin{aligned}
D(x,y):&=\frac12\Big(\int_\Omega(Y(x)-Y(y))^2\,d\mathbb P\Big)^{1/2}
\\
&=\frac12\big(k(x,x)+k(y,y)
-2k(x,y)\big)^{1/2}
\\
&=\Big( \int_{\mathbb R^d}\varkappa(u)\big(\varkappa(u)-\varkappa(u+y-x)\big)
\,du   \Big)^{1/2},
\quad x,y\in\mathbb R^d.
\end{aligned}
\end{equation}
The associated metric entropy $H(D,\delta)$ is defined as $H(D,\delta):=\log N(D,\delta)$, where $N(D,\delta)$ is the minimal number of points in a $\delta$-net in $B(0,1)=\{x\in\mathbb R^d\mid |x|\le1\}$ with respect to the semimetric  $D$, i.e., points $x_i$ such that the open balls centered at $x_i$ and of radius $\delta$ (with respect to $D$) cover $B(0,1)$. The expression
$$ J(D):=\int_0^1 \sqrt{H(D,\delta)}\,d\delta$$
is called the Dudley integral. The following result holds, see e.g.\ \cite[Corollary~7.1.4]{Bogachev} and the references therein.

\begin{theorem}\label{gtsre} Assume that $J(D)<\infty$. Then the Gaussian random field $(Y(x))_{x\in\mathbb R^d}$ has a continuous modification.
\end{theorem}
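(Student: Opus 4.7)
The plan is to prove the statement by the classical Dudley chaining argument for Gaussian processes, exploiting the translation invariance of the semimetric $D$. The key observation is that, because $\varkappa(x-y)$ depends only on the difference, the expression in \eqref{gdtrdjjjv} shows that $D(x,y)$ depends only on $y-x$. Consequently $N(D,\delta)$ computed on any ball $B(x_0,R)$ is bounded by a function of $R$ and $N(D,\delta)$ computed on $B(0,1)$, so finiteness of the Dudley integral on the unit ball is inherited by every bounded subset of $\mathbb R^d$. It therefore suffices to construct a continuous modification on each $B(0,R)$, $R\in\mathbb N$, and to patch them together by a diagonal extraction.

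Fix $R\in\mathbb N$. On $B(0,R)$ I would set $\delta_n=2^{-n}$, choose minimal $\delta_n$-nets $T_n$ of cardinality $N_n$, and for each $x$ pick $\pi_n(x)\in T_n$ with $D(\pi_n(x),x)<\delta_n$. The telescoping identity
\[
Y(\pi_n(x))=Y(\pi_0(x))+\sum_{k=0}^{n-1}\bigl(Y(\pi_{k+1}(x))-Y(\pi_k(x))\bigr)
\]
presents $Y(\pi_n(x))$ as a sum of centered Gaussian increments whose standard deviations are at most $2D(\pi_{k+1}(x),\pi_k(x))\le 2(\delta_{k+1}+\delta_k)\le 3\delta_k$. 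The usual union bound over the at most $N_k N_{k+1}$ possible pairs, combined with the Gaussian tail estimate $\mathbb P(|Z|>t\sigma)\le 2e^{-t^2/2}$, yields
\[
\mathbb E\Big[\sup_{x\in B(0,R)}\sum_{k=0}^{\infty}\bigl|Y(\pi_{k+1}(x))-Y(\pi_k(x))\bigr|\Big]\le C\sum_{k=0}^{\infty}\delta_k\sqrt{\log(N_k N_{k+1})},
\]
and the right-hand side is controlled by a constant multiple of $J(D)<\infty$ after comparing the series with the defining integral. Hence $Y(\pi_n(\cdot))$ converges $\mathbb P$-almost surely and uniformly on $B(0,R)$ to a limit $\widetilde Y(\cdot)$ that is uniformly continuous in the semimetric $D$, and by \eqref{tsrs5r} and \eqref{gdtrdjjjv} the map $(x,y)\mapsto D(x,y)$ is continuous in the Euclidean topology, so $\widetilde Y$ is in fact Euclidean-continuous.

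To conclude I would verify that $\widetilde Y(x)=Y(x)$ $\mathbb P$-almost surely for each fixed $x$: the sequence $Y(\pi_n(x))$ converges to $Y(x)$ in $L^2(\mathbb P)$, because $\mathbb E[(Y(\pi_n(x))-Y(x))^2]=4D(\pi_n(x),x)^2\le 4\delta_n^2\to 0$, while it simultaneously converges almost surely to $\widetilde Y(x)$. A standard diagonal selection then assembles the continuous modifications on $B(0,R)$, $R\in\mathbb N$, into a single continuous modification on $\mathbb R^d$.

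The main obstacle, and the step requiring the most care, is the Gaussian maximal inequality: controlling $\mathbb E\sup_{(u,v)\in T_k\times T_{k+1}}|Y(u)-Y(v)|$ uniformly in $k$ and summing the resulting bounds so as to match the integral $J(D)$. The cleanest way to execute this is through the Borell--Sudakov--Tsirelson concentration inequality combined with a careful dyadic comparison of the sum with $\int_0^1\sqrt{H(D,\delta)}\,d\delta$. Everything else is bookkeeping, which is why the paper simply invokes the ready-made \cite[Corollary~7.1.4]{Bogachev}.
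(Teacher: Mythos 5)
Your proposal is correct and follows exactly the Dudley entropy/chaining argument that the paper invokes wholesale by citing \cite[Corollary~7.1.4]{Bogachev}; the only substantive addition is your (correct) observation that $D(x,y)$ depends only on $y-x$ because $\varkappa$ is a difference kernel, which lets you transfer the entropy bound from $B(0,1)$ to every ball $B(0,R)$ and hence patch local modifications into one on all of $\mathbb{R}^d$. The one detail you gloss over --- that the uniform limit of the piecewise-constant approximations $Y(\pi_n(\cdot))$ is $D$-uniformly continuous, which needs the additional same-level estimate on $\sup\{|Y(u)-Y(v)|:u,v\in T_m,\ D(u,v)\le 3\delta_m\}$ --- is a standard ingredient of the same chaining scheme and is covered by the union bound you already set up.
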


\begin{remark}\label{cdtrstrh}
Let $\varkappa$ be as in Remark~\ref{gdctsjh}. Then, by \eqref{gdtrdjjjv}, for any $x,y\in B(0,1)$
\begin{align*}
D(x,y)^{2}&\le\|\varkappa(\cdot)\|_{L^2(\mathbb R^d,dx)}\Big(\int_{\mathbb R^d}(\varkappa(u)-
\varkappa(u+y-x))^2\,du\Big)^{1/2}\\
&\le \|\varkappa(\cdot)\|_{L^2(\mathbb R^d,dx)}\|K(\cdot,2)\|_{L^2(\mathbb R^d,dx)}|y-x|,
\end{align*}
where we assumed that $K(\cdot,2)\in L^2(\mathbb{R}^d,dx)$.
Then $J(D)<\infty$, see e.g.\ \cite[Example~7.1.5]{Bogachev}.
\end{remark}

Denote by $\ddot\Gamma$ the space of all multiple configurations in $\mathbb R^d$. Thus, $\ddot\Gamma$ is the set of all Radon $\mathbb Z_+\cup\{+\infty\}$-valued measures on $\mathbb R^d$, In particular, $\Gamma\subset\ddot\Gamma$. Analogously to the case of $\Gamma$, we define the vague topology on $\ddot\Gamma$ and the corresponding Borel $\sigma$-algebra $\mathcal B(\ddot\Gamma)$.

\begin{theorem}\label{dse5duyd} Let $\varkappa(x,y)$ be of the form $\varkappa(x-y)$ for some $\varkappa\in L^2(\mathbb R^d,dx)$. Let $J(D)<\infty$. Let $l\in \mathbb N$ and  $c>0$. Then

(i) The bilinear form $(\mathcal E_0,\mathcal{F}C^\infty_{\mathrm b}(C^\infty_{0}(\mathbb R^d),\Gamma))$ defined by \eqref{sawawe} is closable on $L^2(\Gamma,\mu^{(l)})$ and its closure will be denoted by $(\mathcal E_0,D(\mathcal E_0))$.

(ii) There exists a conservative diffusion process
\begin{align*}
M^{0}=\left(\Omega^{0},
\mathcal{F}^{0},(\mathcal{F}^{0}_{t})_{t\geq
0},(\Theta_{t}^{0})_{t\geq 0}, (X^{0}(t))_{t\geq 0},
(P^{0}_{\gamma})_{\gamma\in \ddot\Gamma}\right)
\end{align*}
on $\ddot\Gamma$ which is properly associated with
$(\mathcal{E}_{0}, D(\mathcal{E}_{0}))$. In particular,
$M^{0}$ is $\mu^{(l)}$-symmetric and has $\mu^{(l)}$ as invariant measure. In the case $d\ge2$, the set $\ddot\Gamma\setminus\Gamma$ is $\mathcal E^0$-exceptional, so that $\ddot\Gamma$ may be replaced by with $\Gamma$ in the above statement.
\end{theorem}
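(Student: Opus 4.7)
The strategy is to reduce the form $(\mathcal{E}_0,\mathcal{F}C^\infty_{\mathrm b}(C^\infty_{0}(\mathbb R^d),\Gamma))$ to a mixture of well-studied intrinsic Dirichlet forms for Poisson point processes, and then invoke the quasi-regular Dirichlet form theory of Ma--R\"ockner \cite{MR}. First I would apply the Papangelou identity \eqref{3} for $\mu^{(l)}$ to rewrite \eqref{sawawe} in Mecke form as
$$\mathcal{E}_0(F,G)=c\int_\Gamma \mu^{(l)}(d\gamma)\sum_{x\in\gamma}\la\nabla_x F(\gamma),\nabla_x G(\gamma)\ra,$$
and then insert the Cox representation \eqref{44} to obtain
$$\mathcal{E}_0(F,G)=\int_\Omega \widetilde{\mathbb P}(d\omega)\,\mathcal{E}_0^\omega(F,G),$$
where
$$\mathcal{E}_0^\omega(F,G):=c\int_\Gamma \pi_{g^{(l)}(x,\omega)\nu(dx)}(d\gamma)\sum_{x\in\gamma}\la\nabla_x F(\gamma),\nabla_x G(\gamma)\ra$$
is the standard intrinsic (Brownian-type) Dirichlet form associated with the Poisson measure of intensity $g^{(l)}(\cdot,\omega)\nu$. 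The hypothesis $J(D)<\infty$, combined with Theorem~\ref{gtsre}, yields a continuous modification of $Y$, so that $g^{(l)}(\cdot,\omega)$ is continuous and locally integrable for $\widetilde{\mathbb P}$-a.a.\ $\omega$.

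For closability in (i), I would use the classical result (Albeverio, Kondratiev, R\"ockner) that, for any locally integrable intensity, the intrinsic Dirichlet form on configuration space associated with the corresponding Poisson measure is closable. Applied to each $\omega$ outside a $\widetilde{\mathbb P}$-null set, this gives closability of $\mathcal{E}_0^\omega$ on $L^2(\Gamma,\pi_{g^{(l)}(x,\omega)\nu(dx)})$. Closability of the mixture then follows by a standard disintegration: if $(F_n)\subset \mathcal{F}C^\infty_{\mathrm b}(C^\infty_{0}(\mathbb R^d),\Gamma)$ is $\mathcal{E}_0$-Cauchy with $F_n\to 0$ in $L^2(\mu^{(l)})$, then along a subsequence and for $\widetilde{\mathbb P}$-a.a.\ $\omega$, $(F_n)$ is $\mathcal{E}_0^\omega$-Cauchy with $F_n\to 0$ in $L^2(\pi_{g^{(l)}(x,\omega)\nu(dx)})$; closability of $\mathcal{E}_0^\omega$ gives $\mathcal{E}_0^\omega(F_n,F_n)\to 0$, and Fatou's lemma in $\omega$ yields $\mathcal{E}_0(F_n,F_n)\to 0$.

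For (ii), the closure $(\mathcal{E}_0,D(\mathcal{E}_0))$ is a symmetric, local Dirichlet form on $L^2(\ddot\Gamma,\mu^{(l)})$ (viewing $\Gamma\subset\ddot\Gamma$). Locality is immediate from the pointwise nature of $\nabla_x$, and quasi-regularity on $\ddot\Gamma$ is produced by the usual compact-exhaustion construction (picking $\Lambda_n\uparrow\mathbb R^d$ in $\mathcal B_0(\mathbb R^d)$ and building a nest $K_n\subset\ddot\Gamma$ through local cut-off functionals), as in the Poisson, Gibbs and determinantal cases. The Ma--R\"ockner representation theorem \cite{MR} for quasi-regular local symmetric Dirichlet forms then provides the conservative diffusion $M^0$ properly associated with $(\mathcal{E}_0,D(\mathcal{E}_0))$, together with its $\mu^{(l)}$-symmetry and $\mu^{(l)}$-invariance. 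Finally, for $d\ge 2$ the exceptionality of $\ddot\Gamma\setminus\Gamma$ is obtained by adapting a R\"ockner--Schmuland-style capacity estimate: conditionally on $\omega$, two independent particles evolving as Brownian motions in $\mathbb R^d$ with $d\ge 2$ almost surely never collide, and this transfers to vanishing $\mathcal{E}^0$-capacity of the collision set through the Cox disintegration.

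The main technical obstacle I anticipate is the simultaneous choice of an $\mathcal{E}_0$-nest that works for $\widetilde{\mathbb P}$-a.a.\ $\omega$ (i.e., compatibility of the compact-exhaustion with the Cox mixture), together with the capacity estimate ruling out multiple points when $d\ge 2$. Both are close in spirit to the Poisson and determinantal analogues already in the literature, and should go through here with only technical adjustments coming from the quadratic Gaussian nature of $g^{(l)}$.
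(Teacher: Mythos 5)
Your proposal follows essentially the same route as the paper: both pass through the Papangelou/Cox representation to disintegrate $\mathcal E_0$ over $\omega$, use $J(D)<\infty$ and Theorem~\ref{gtsre} to obtain a continuous version of $Y(\cdot,\omega)^2$, reduce closability to that of the one-particle gradient form with this continuous density (the paper cites \cite[Theorems 6.2 and 6.3]{dSKR}, you invoke the equivalent Albeverio--Kondratiev--R\"ockner result for Poisson intrinsic forms together with the standard superposition argument), and then obtain the diffusion and the $d\ge2$ support statement from the Ma--R\"ockner and R\"ockner--Schmuland machinery, exactly as the paper does by reference to \cite{MR2,RS,KLR}. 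One correction: closability of the intrinsic Poisson Dirichlet form is \emph{not} automatic for an arbitrary locally integrable intensity --- it requires closability of the one-particle form $\int|\nabla f|^2 g^{(l)}(x,\omega)\,dx$ on $L^2(g^{(l)}(x,\omega)\,dx)$, which needs a Hamza-type condition on the density; the continuity of $g^{(l)}(\cdot,\omega)$ that you derive from $J(D)<\infty$ is precisely what supplies this, so the hypothesis in your cited ``classical result'' should read ``continuous'' (or Hamza) rather than ``locally integrable''.
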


\begin{proof} We again discuss only  the case $l=1$, omitting the upper index $(1)$. By \eqref{sawawe}, for any  $F,G\in \mathcal{F}C^\infty_{\mathrm b}(C^\infty_{0}(\mathbb R^d),\Gamma)$,
\begin{equation}
\begin{aligned}
\mathcal E_0(F,G)&=c\int_{\Omega\times\Gamma}\tilde{\mathbb P}(d\omega,d\gamma)\int_{\mathbb R^d}
dx\, \tilde {\mathbb E}(Y(x,\omega)^2\mid\mathcal F) \la \nabla_x F(\gamma\cup x),\nabla_x G(\gamma\cup x)\ra
\\
&=\int_{\Omega\times\Gamma}\tilde{\mathbb P}(d\omega,d\gamma)\int_{\mathbb R^d}
dx\, Y(x,\omega)^2
\\
&\times \la \nabla_x (F(\gamma\cup x)-F(\gamma)),\nabla_x (G(\gamma\cup x)-G(\gamma))\ra.
\end{aligned}
\end{equation}
Fix $(\omega,\gamma)\in\Omega\times\Gamma$.
Denote
$$ f(x):=F(\gamma\cup x)-F(\gamma), \quad g(x):=G(\gamma\cup x)-G(\gamma).$$
Clearly, $f,g\in C_0^\infty(\mathbb R^d)$.
In view of Theorem~\ref{gtsre}, $Y(x,\omega)^2$ is a continuous function of $x\in\mathbb R^d$.  Hence, by \cite[Theorem~6.2]{dSKR}, the bilinear form
$$ \mathcal E(f,g):=\int_{\mathbb R^d} \la \nabla f(x),\nabla g(x)\ra Y(x,\omega)^2dx,
\quad f,g\in C_0^\infty(\mathbb R^d),$$
is closable on $L^2(\mathbb R^d, |Y(x,\omega)|^2\,dx)$. Now the closability of $(\mathcal E_0,\mathcal{F}C^\infty_{\mathrm b}(C^\infty_{0}(\mathbb R^d),\Gamma))$  on $L^2(\Gamma,\mu^{(l)})$ follows by a straightforward generalization of the proof of \cite[Theorem~6.3]{dSKR}.
Part (ii) of the theorem can be shown completely analogously to \cite{MR2, RS}, see also \cite{KLR}.
\end{proof}

\begin{remark}
Heuristically, the generator of $(\mathcal E_0,D(\mathcal E_0))$ has the form
$$ (LF)(\gamma)=\sum_{x\in\gamma}\Big(\Delta_xF(\gamma)+\Big\la \frac{\nabla_xr(x,\gamma
\setminus x)}{r(x,\gamma\setminus x)}\,,\nabla_x F(\gamma)\Big\ra\Big).  $$
Here, for $x\in\gamma$, $\nabla_xF(\gamma):=\nabla_yF(\gamma\setminus x\cup y)\big|_{y=x}$ and analogously $\Delta_x$ is defined. However, we should not expect that $r(x,\gamma)$ is differentiable in $x$.
\end{remark}

{\it Acknowledgments.}
We are grateful to Alexei Daletskii, Dmitri Finkelshtein, Yuri Kondratiev, and Olexandr Kutoviy for many useful
discussions. EL acknowledges the financial support of the SFB~701 ``Spectral structures and topological
methods in mathematics'', Bielefeld University.  EL was partially supported by
  the International Joint Project grant 2008/R2 of the Royal Society and by
 the PTDC/MAT/67965/2006 grant, University of Madeira.

\end{document}